\documentclass[leqno,10pt]{article}
\usepackage{amsmath}
\usepackage{amsfonts}
\usepackage{array}
\usepackage{enumerate}
\usepackage{graphicx}
\usepackage{amssymb}
\usepackage{amsthm}
\usepackage{xcolor}
\usepackage{chemarr}

\numberwithin{equation}{section}

\newtheorem{theorem}{Theorem}
\newtheorem{proposition}[theorem]{Proposition}

\newtheorem{definition}[theorem]{Definition}

\newtheorem{remark}{Remark}
\newtheorem{example}{Example}
\newcommand{\R}{\mathbb{R}}

\DeclareMathOperator{\diag}{diag}
\def\tr{\mathrm{tr}}

\newcommand{\norm}[2][\relax]{\ifx#1\relax \ensuremath{\left\Vert#2\right\Vert} \else \ensuremath{\left\Vert#2\right\Vert_{#1}}\fi}

\begin{document}

\title{Singular perturbations and scaling}

\author{Christian Lax, Sebastian Walcher\\
Mathematik A, RWTH Aachen \\
52056 Aachen, Germany\\
}


\maketitle
\begin{abstract} { Scaling transformations involving a small parameter ({\em degenerate scalings}) are frequently used for ordinary differential equations that model (bio-) chemical reaction networks. They are motivated by quasi-steady state (QSS) of certain chemical species, and ideally lead to slow-fast systems for singular perturbation reductions, in the sense of Tikhonov and Fenichel. In the present paper we discuss properties of such scaling transformations, with regard to their applicability as well as to their determination. Transformations of this type are admissible only when certain consistency conditions are satisfied, and they lead to singular perturbation scenarios only if additional conditions hold, including a further consistency condition on initial values. Given these consistency conditions, two scenarios occur. The first (which we call standard) is well known and corresponds to a classical quasi-steady state (QSS) reduction. Here, scaling may actually be omitted because there exists a singular perturbation reduction for the unscaled system, with a coordinate subspace as critical manifold. For the second (nonstandard) scenario scaling is crucial. Here one may obtain a singular perturbation reduction with the slow manifold having dimension greater than expected from the scaling. For parameter dependent systems we consider the problem to find all possible scalings, and we show that requiring the consistency conditions allows their determination. This lays the groundwork for algorithmic approaches, to be taken up in future work. In the final section we consider some applications. In particular we discuss relevant nonstandard reductions of certain reaction-transport systems.}

{\bf MSC (2010):} 92C45, 34E15, 80A30, 13P10  \\
{\bf Key words}: Reaction networks, dimension reduction, invariant sets, critical manifold.\\

\end{abstract}

\section{Introduction}

{In the mathematical analysis of reaction networks, degenerate scaling (i.e. scaling with a small parameter) is frequently applied to set quasi-steady state (QSS) behavior of certain species in a proper mathematical framework: Here, Tikhonov's classical theorem \cite{tikh} on singular perturbations may become applicable for the scaled system. It seems that Heineken, Tsuchiya and Aris \cite{hta} were the first to rigorously apply this strategy to a biochemical reaction network (the Michaelis-Menten system), and this laid the groundwork for many further applications. Generally, singular perturbation methods (in a geometric framework introduced by Fenichel \cite{feninv,fenichel}) provide a powerful tool to reduce the dimension of parameter-dependent differential equation systems. \\
Scaling methods comprise just one approach to search for singular perturbation scenarios. In chemical reaction networks they correspond to QSS for certain chemical species, but partial equilibrium approximations (PEA, motivated by slow and fast reactions) are equally relevant; see e.g. Heinrich and Schauer \cite{hs}, Goussis \cite{gouQSS}. From a purely mathematical perspective, a method to determine all critical parameter values for singular perturbation reduction of polynomial or rational ODE systems, as well as to compute the reduced systems, was recently introduced in \cite{gw2} and \cite{gwz}. This method requires no a priori input, such as stipulating or guessing slow and fast variables or reactions, but (as should be expected) there are feasibility problems when dimensions increase. Thus, searching for singular perturbation reductions under sensible a priori restrictions (e.g. inspired by chemistry) remains relevant, in particular for higher dimensions. Moreover, recent work by Noel et al. \cite{noelgvr}, Samal et al. \cite{sgfr,sgfwr} has revived interest in scaling transformations. The work in \cite{noelgvr,sgfr,sgfwr}  is based on an interpretation of QSS as cancellation of fast reaction terms, and allows to employ methods from tropical geometry to identify slow and fast variables. While details are intricate, considering only terms of lowest order in the small parameter yields a degenerate scaling in the above mentioned sense. \\}

{The present article contains a detailed analysis of degenerate scalings and their applications. We recall some facts and introduce some notation in Section 2. In Section 3 we present a detailed (and elementary) discussion of necessary conditions for admissibility of degenerate scalings. These conditions include  local consistency requirements (regarding the existence of certain invariant sets, resp. the existence of certain manifolds of equilibria) but also a requirement on initial values for the scaled variables. In the remainder of the paper these consistency conditions will be assumed and utilized.
We proceed to discuss reductions, with two scenarios of interest. For the first (``standard'') case, Heineken et al. \cite{hta} is a paradigmatic example. But in this standard case scalings are not a necessary prerequisuite for singular perturbation reductions; indeed Fenichel's theory \cite{fenichel} is directly applicable. However, the situation is different in the second (``nonstandard'') scenario, which seems to have received less attention: Scaling is necessary here, and for the scaled system one obtains a singular perturbation reduction, with a slow manifold of dimension greater than one may expect from the scaling.
In Section 4 we use the consistency conditions to determine all possible degenerate scalings of a given parameter dependent system, thus laying a foundation for algorithmic approaches in future work.
In Section 5 we discuss applications to PTM networks and to reaction transport systems. The latter illustrate that nonstandard scenarios appear in relevant applications. Some facts from \cite{gw2} are collected in Appendix A, for the reader's convenience. A few new results are also included, with proofs. Appendix B contains some computations.}

\section{Motivation and basic notions}
Our discussion starts from a system 
\begin{equation}\label{specpar}
\dot z = \widehat h(z,\varepsilon)=\widehat h^{(0)}(z)+\varepsilon \widehat h^{(1)}(z)+\cdots
\end{equation}
with $z\in \mathbb R^n$ and  a distinguished real ``small parameter'' $\varepsilon$. The system may depend on further parameters which are assumed to be be fixed and will be suppressed in the notation until further notice. For the sake of simplicity we always assume that $\widehat h$ is defined and smooth on an open subset of $\mathbb R^n\times\mathbb R$ that contains some point $(z_0,0)$. We will later specialize to polynomial and rational systems, due to our primary interest in reaction networks.\\
Consider a partitioning 
\begin{equation}\label{parteq}
z=\begin{pmatrix}x\\ y\end{pmatrix} \text{  with  }x\in \mathbb R^r,\, y\in\mathbb R^s,\,r\geq1,\,s\geq1,\,r+s=n, 
\end{equation}
and accordingly rewrite \eqref{specpar} with initial value $z_0=(x_0,y_0)^{\rm tr}$ as a system
\begin{equation}\label{sepsys}
\begin{array}{rccccl}
\dot x&=& f(x,y,\varepsilon)&=&f_0(x,y)+\varepsilon f_1(x,y)+\varepsilon^2\cdots, &x(0)=x_0\\
\dot y&=& g(x,y,\varepsilon)&=&g_0(x,y)+\varepsilon g_1(x,y)+\varepsilon^2\cdots,& y(0)=y_0,
\end{array}
\end{equation}
defined in a neighborhood of $U\times V\times\{0\}$,
where $U\subseteq\mathbb R^r$, $V\subseteq\mathbb R^s$ have nonempty interior, and $0\in {\rm int}\,V$. We are interested in the asymptotic behavior of the system as $\varepsilon\to 0$ in {\em singular settings} (according to Fenichel's \cite{fenichel} working definition), thus $\widehat h^{0}$ has non-isolated zeros.\\
We recall some pertinent facts from singular perturbation theory; see e.g. Verhulst \cite{verhulst}. Assume that system \eqref{sepsys} is in \textit{Tikhonov standard form}, and the partitioning separates slow and fast variables, thus
\begin{equation}\label{tikhostandard}
\begin{array}{rcccl}
\dot x&=& &  &\varepsilon\widetilde f_1(x,y)+\varepsilon^2\cdots,\quad x(0)=x_0\\
\dot y&=&\widetilde  g_0(x,y)&+&\varepsilon\widetilde g_1(x,y)+\varepsilon^2\cdots,\quad y(0)=y_0,\\
\end{array}
\end{equation}  
where $\varepsilon >0$. 
One may rewrite the system in {\em slow time} $\tau =\varepsilon t$ in the form
\begin{equation}\label{tikhostandardslowtime}
\begin{array}{rcccl}
x^\prime &=& &  &\widetilde f_1(x,y)+\varepsilon\cdots,\quad x(0)=x_0\\
\varepsilon y^\prime&=&\widetilde g_0(x,y)&+&\varepsilon\widetilde g_1(x,y)+\varepsilon^2\cdots,\quad y(0)=y_0.\\
\end{array}
\end{equation}
We furthermore assume that at some point of $U\times V$ the hypotheses of the implicit function theorem hold, hence the equation $\widetilde g_0(x,y)=0$  locally admits a unique solution in the form  $y=\phi(x)$, whence the zero set of $\widetilde g_0$ contains an $r$-dimensional submanifold $Z$, called the critical manifold (or asymptotic slow manifold). Every point of $Z$ is stationary for the system at $\varepsilon=0$. Classical results by Tikhonov \cite{tikh} and Fenichel  \cite{fenichel} show the existence of an asymptotic reduction as $\varepsilon\to 0$, given suitable conditions on $U$, $V$ and $\widetilde g_0$; see Verhulst \cite{verhulst}, Thm.~8.1ff.   {We impose somewhat stronger conditions than in \cite{verhulst}, requiring that for some $\nu>0$ all eigenvalues of the partial derivative $D_2\widetilde g_0(x,y)$, with $(x,y)\in Z$, have real parts $\leq -\nu$; thus $Z$ is locally exponentilly attracting.  }
Then, by results of O'Malley/Vasil'eva (see \cite{verhulst}, Thm.~8.2), there exist a neighborhood of $Z$ and some $T_2>0$ such that for any initial value $(x_0,y_0)$ in this neighborhood, the solution of \eqref{tikhostandard} converges uniformly to the solution of the reduced initial value problem
\[
x^\prime =\widetilde  f_1(x,\phi(x)),\quad x(0)=x_0
\]
in slow time $\tau$ on any interval $\left[T_1, \,T_2\right]$ with $0< T_1<T_2$, as $\varepsilon\to 0$. \\

In many applications a parameter dependent system is not initially written in Tikhonov standard form. Then one may attempt to scale certain variables by a parameter $\varepsilon>0$ to obtain a system in standard form, and then let $\varepsilon\to 0$. As an illustration (and as a benchmark later on) we consider the well-known irreversible Michaelis-Menten system
\begin{equation}\label{mmirr}
\begin{array}{rcl}
\dot{s}&=&-k_1es+k_{-1}c\\
\dot{e}&=&-k_1es +(k_{-1}+k_2)c\\
\dot{c}&= &k_1es -(k_{-1}+k_2)c\\
\end{array}
\end{equation}
with (typical) initial values $s(0)=s_0>0$, $c(0)=0$; $e(0)=e_0>0$. From the first integral $e+c$ one obtains the two-dimensional system
\begin{equation}\label{mmirr2d}
\begin{array}{rcl}
\dot{s}&=&-k_1e_0s+(k_1s+k_{-1})c\\
\dot{c}&= &k_1e_0s -(k_1s+k_{-1}+k_2)c.\\
\end{array}
\end{equation}
Credit for a mathematically rigorous reduction by scaling of this system is due to Heineken, Tsuchiya and Aris \cite{hta}. The technique was subsequently used in numerous further applications; see e.g.~Murray's monograph \cite{Mur} on mathematical biology for many examples. We outline the essential features of the reduction to motivate the following sections.
\begin{example}\label{mmex}
{\em 
Consider \eqref{mmirr2d} for small initial enzyme concentration $e_0=\varepsilon e_0^*$.
\begin{itemize}
\item The system  is not in Tikhonov standard form, but upon replacing $c$ by $c^*:= c/\varepsilon$ one obtains the standard form
\[
\begin{array}{rcl}
\dot{s}&=&\varepsilon (-k_1se_0^*+(k_1s+k_{-1})c^*)\\
\dot{c^*}&= &k_1se_0^* -(k_1s+k_{-1}+k_2)c^*.\\
\end{array}
\]
Tikhonov's theorem yields the familiar reduction to the one-dimensional equation $\dot s=-k_1k_2e_0s/(k_1s+k_{-1}+k_2)$.
\item The scaling $c=\varepsilon c^*$ becomes non-invertible in the limit $\varepsilon\to 0$. This degeneracy does not affect the validity of the reduction (since passing back from $c^*$ to $c$ is unproblematic), but the crucial point is that by passing from $c$ to $c^*$ one still has a well-defined smooth system.
\item Scaling is not necessarily related to singular perturbations. Its primary purpose is to nondimensionalize the system and to eliminate unnecessary parameters. According to this guideline, Heineken et al.  \cite{hta} introduce further scalings, e.g. $s= s^*\cdot s_0$, and $\varepsilon=e_0/s_0$. In the course of these scalings the ``small parameter'' appears in a natural way. 
(For more about scaling see also Murray \cite{Mur}, Ch.~6, Sections 6.1--6.3, Segel and Slemrod \cite{SSl}.)
\item We reproduced the scaling transformations in \cite{hta} only in part, since we wanted to emphasize the role of the degeneracy in the scaling. On the other hand, scaling may obscure implicit occurrences of the small parameter, and thus may cause misconceptions in limiting processes. Indeed, the scaling $\mu=e_0/s_0$ from \cite{hta} yields $\mu\to 0$ whenever $s_0\to\infty$. However (with the notation of \cite{hta}, equation (10)) letting $s_0\to\infty$ and keeping the other parameters constant and positive implies that $\kappa\to 0$ and $\lambda\to 0$. Thus Tikhonov will yield a reduced equation with trivial right-hand side, instead of the familiar singular perturbation reduction. (This observation is not in disagreement with the direct convergence proof given in Segel and Slemrod \cite{SSl}, Section 6. Indeed, one of their assumptions on the parameters implies boundedness of $s_0$.)
\end{itemize}
}
\end{example}
\begin{definition}
Given a parameter $\varepsilon>0$, we call the linear transformation
\begin{equation}\label{partscale}
\begin{pmatrix}x\\ y\end{pmatrix}=\begin{pmatrix}x\\ \varepsilon y^*\end{pmatrix}
\end{equation}
an {\em asymptotically degenerate scaling} as $\varepsilon\to 0$ (briefly, a {\em degenerate scaling}) with respect to the partitioning \eqref{parteq}. 
\end{definition}
Degenerate scalings are abundant in the literature, in particular for reaction networks. Recently they appeared in connection with an algebraic approach to a systematic reduction of reaction networks:
\begin{example}\label{tropex}
\em{The tropical equilibration approach to identify quasi-steady state species in reaction networks was proposed and developed in Noel et al. \cite{noelgvr}, Samal et al. \cite{sgfwr} (see also Radulescu et al. \cite{rvg},  Samal et al. \cite{sgfr}). Given system \eqref{specpar}, the authors consider the dependence on $\varepsilon$ in more detail, letting $\widehat h$ be a finite linear combination of polynomial terms with integer powers of $\varepsilon$ as coefficients. (Rational powers are also permitted in \cite{sgfwr}, but this would amount to renaming the small parameter.) The choice of a scaling
\[
z_j=\widehat z_j\,\varepsilon^{a_j}
\]
is motivated by a particular mathematical interpretation of quasi-steady state: For variables in QSS the rate of change (i.e., the corresponding entry on the right-hand side of the scaled differential equation) should not be of lowest order in $\varepsilon$; hence a cancellation of dominant terms must occur. Consequently more than one dominant term must be present and this, in turn, provides conditions for the $a_j$ that are amenable to Newton polytope arguments. For the underlying assumptions and more details see Noel et al.~\cite{noelgvr}, Section 3, and in particular Lemma 3.3. (No explicit reference to Tikhonov is made in that paper; the connection is noted in Samal et al. \cite{sgfwr}.)\\
Carrying out the procedure sketched above yields a system 
\[
\varepsilon^{a_j}\dot{\widehat z_j}= H_j(\widehat z, \varepsilon),\quad 1\leq j\leq n,
\]
with rational numbers $a_j$. Redefining $\varepsilon$ and scaling time appropriately (via $\widehat t=\varepsilon^b\,t$), one may again assume that all $a_j$ are nonnegative integers, and at least one of them is equal to $0$. Setting
\[
\widetilde z_j:= \varepsilon^{a_j-1}\widehat z_j\text{  whenever  } a_j>0, \quad \widetilde z_j:= \widehat z_j \text{  whenever  } a_j=0,
\]
one arrives at a scaled system of the type
\[
\begin{array}{rcl}
\dot x&=&f_0(x,y)+o(1)\\
\varepsilon \dot y&=& g_0(x,y)+o(1).
\end{array}
\]
}
\end{example}

\section{Degenerate scalings}
In this section we discuss necessary and sufficient conditions such that \eqref{sepsys} can be transformed into Tikhonov standard form by a degenerate scaling $y=\varepsilon y^*$. Furthermore we provide conditions to ensure that the transformed system admits a singular perturbation reduction in the sense of Tikhonov and Fenichel, and we discuss the reductions.
\subsection{Consistency}
Replacing $y$ by $\varepsilon y^*$ in \eqref{sepsys} and rearranging yields
\begin{equation}\label{sepsysscal}
\begin{array}{rcccl}
\dot x&=& f_0(x,0)&+&\varepsilon (f_1(x,0)+D_2f_0(x,0)y^*)+\varepsilon^2\cdots\\
\dot y^*&=& \varepsilon^{-1}g_0(x,0)&+&(g_1(x,0)+D_2g_0(x,0)y^*)+\varepsilon\cdots \\
\end{array}
\end{equation}
with initial values $x(0)=x_0$, $y^*(0)=y_0^*:=\varepsilon^{-1}y_0$. The second equation, rewritten as
\[
\varepsilon \dot y^*=g_0(x,0)+\varepsilon (g_1(x,0)+D_2g_0(x,0)y^*)+\varepsilon^2\cdots,
\]
seems to fit the mold of \eqref{tikhostandardslowtime}. But the conditions for Tikhonov's theorem are never satisfied when $g_0(\cdot,\,0)\not=0$, since a resolution of the implicit equation in the form $y=\varphi(x)$ is impossible. Moreover there is a deeper reason for considering only the case $g_0(\cdot,\,0)=0$, as an elementary argument shows.
\begin{remark}\label{g0rem} {\em Assume that $g_0(x,0)$ is not identically zero. Then the scaling $y=\varepsilon y^*$ in \eqref{sepsys} produces solution components $y^*$ of \eqref{sepsysscal} that grow with order $\varepsilon^{-1}$ for arbitrarily small positive times, regardless of $\Vert y^*(0)\Vert$. This fact is readily verified by the continuous depencence theorem for the original system \eqref{sepsys}.\\
An different version of this argument is as follows: \eqref{sepsysscal} may be seen as the system
\[
\begin{array}{rcccl}
\frac{dx}{ds}&=&\varepsilon f_0(x,0)&+&\varepsilon^2\cdots\\
\frac{dy}{ds}&=& g_0(x,0)&+&\varepsilon\cdots \\
\end{array}
\]
rewritten in slow time $t=\varepsilon s$. Nonconstant solutions of the fast system (at $\varepsilon =0$) leave any bounded subset of $\mathbb R^s$.
}
\end{remark}
This simple but crucial remark motivates the first part of the following definition. For the second part compare system \eqref{tikhostandard} in standard form.
\begin{definition}\label{consdef}
\begin{enumerate}[(a)]
\item We call the scaling transformation $y=\varepsilon y^*$ (and system \eqref{sepsysscal}) {\em{locally consistent}} if $g_0(x,0)=0$ for all $x\in U$. 
\item We call the scaling transformation (and system \eqref{sepsysscal}) {\em{locally Tikhonov consistent}} if it is locally consistent and in addition $f_0(x,0)=0$ for all $x\in U$.
\end{enumerate}
\end{definition}
\begin{remark}\label{consrem}
\begin{enumerate}[(a)]
\item By a familiar lemma by Hadamard (see e.g. Nestruev \cite{Nes}), the identity $g_0(x,0)=0$ implies locally that $g_0(x,y)=G_0(x,y)y$, with smooth $G_0$ having values in $\mathbb R^{(s,s)}$. By the same token, we have $f_0(x,y)=F_0(x,y)y$ with $F_0$ having values in $\mathbb R^{(r,r)}$ whenever the identity $f_0(x,0)=0$ holds for all $x$.
\item Local consistency implies (and is by \cite{gwz3}, Lemma 2 equivalent to) invariance of the subspace defined by $y=0$ for system \eqref{sepsys} at $\varepsilon=0$.
\item One could weaken the local consistency condition by requiring only invariance of the zero set $W$ of $x\mapsto g_0(x,0)$ for the equation $\dot x=f_0(x,0)$. But such a choice would necessarily restrict any subsequent analysis to $W\times V$ and impose additional conditions. We will not pursue this further.
\end{enumerate}
\end{remark}
Local consistency (even local Tikhonov consistency) does not gurantee the existence of a singular perturbation reduction for the transformed system \eqref{sepsysscal}, even if eigenvalue conditions for the fast system hold. A second condition is needed.
\begin{definition}
 We call the scaling $y=\varepsilon y^*$ {\em{initial value consistent}} if the initial value for $y$ satisfies $y_0=\varepsilon y_0^*$, with fixed $y_0^*$.
\end{definition}
This condition ensures that the initial value $(x_0,y_0^*)$ remains in a fixed domain after scaling, which is a hypothesis of Tikhonov's theorem as stated in Verhulst \cite{verhulst}, Thm. 8.1. {We emphasize that this hypothesis is indeed necessary, and the formally reduced system may provide an incorrect limit as $\varepsilon\to 0$ if initial value consistency is absent.} The source of the problem lies in the fast dynamics.

\begin{example}\label{linex} 
{\em 
\begin{itemize}
\item Consider the  two-dimensional linear system
\begin{equation}\label{linex1}
\begin{array}{rcl}
\dot x &=&\varepsilon  ax + by\\
\dot y &=& cy
\end{array}
\end{equation}
with small parameter $\varepsilon$, constants $a\leq0$, $b$ arbitrary, and $c<0$, and initial values $x_0$ resp. $y_0$ at $t=0$. Scaling $y=\varepsilon y^*$ and passing to slow time $\tau$, one obtains 
\begin{equation}\label{linex2}
\begin{array}{rcl}
x' &=& ax + by^*\\
(y^*)' &=& \varepsilon^{-1} cy^*
\end{array}
\end{equation}
with initial values $x(0)=x_0$ and $y^*(0)=\varepsilon^{-1}y_0$. The scaled differential equation seems to be amenable to Tikhonov's theorem, but the ``escaping" initial value $y^*(0)$ leads to a noticeable discrepancy between the exact solution and the corresponding solution of the formally reduced equation: The first component of the solution of \eqref{linex2} equals
\[
x(\tau)=\frac{by_0}{c-\varepsilon a}\left(e^{\varepsilon^{-1}c\tau}-e^{a\tau}\right)+ x_0 e^{a\tau}
\]
while the formally reduced equation (given by $y^*=0$ and $x'=ax$) has the solution
\[
x_{\rm red}(\tau)= x_0 e^{a\tau}.
\]
Therefore
\[
x(\tau)-x_{\rm red}(\tau)=\frac{by_0}{c-\varepsilon a}\left(e^{\varepsilon^{-1}c\tau}-e^{a\tau}\right)\to -\frac{by_0}{c} e^{a\tau}\mbox{  as  }\varepsilon\to0,
\]
and $x(\tau)$ does not converge to $x_{\rm red}(\tau)$  in any interval $\left[T_1, \,T_2\right]$, $0< T_1<T_2$. The conclusion of Tikhonov's theorem does not hold.
\item This observation generalizes (with more involved computations) to linear systems
\begin{equation}\label{linex3}
\begin{array}{rc cl}
\dot x &=&  \varepsilon A x + By,&\quad x\in\mathbb R^r\\
\dot y &=&  \varepsilon Dx + Cy, &\quad y\in\mathbb R^s
\end{array}
\end{equation}
with matrices $A$ and $C$ of appropriate sizes, with all eigenvalues of $A$ having real parts $\leq 0$, all eigenvalues of $C$ having negative real parts, and matrices $B$, $D$ of appropriate sizes. The degenerate scaling $y^*=\varepsilon y$ yields correct reductions if (and generically only if) $y_0=\varepsilon y_0^*$ is of order $\varepsilon$. The argument can be extended further to nonlinear systems via Taylor expansions with respect to $y$.
\end{itemize}
}
\end{example}

For the remainder of the present paper we will always assume local consistency, as well as initial value consistency. Frequently we will furthermore require local Tikhonov consistency.\\

Due to the consistency requirements the list of possible scalings is subject to restrictions: Given the partitioning \eqref{parteq}, the zero set of the scaled variables must define an invariant subspace (or consist of stationary points only) at $\varepsilon =0$, and the initial values of these variables must be small.  We will discuss this matter in Section 4 below. With regard to the tropical equilibration ansatz (Example \ref{tropex}), local consistency and initial value consistency also impose additional conditions: It is only appropriate to scale those $\widehat z_j$ with $a_j>0$ and with small initial values. This observation may assist in simplifying the search.\\

\subsection{Reduction: The standard case}\label{secstandard}
The following Proposition includes the reduction from Example \ref{mmex} (and many others) as a special case. Part (a) is straightforward from Hadamard's lemma, and the proof of part (b) is a direct application of Tikhonov's theorem. A general proof of part (c) (which we include because one has to distinguish between singular perturbation and QSS reductions) is given in \cite{gwz3}, Proposition 5.
\begin{proposition}\label{standardapp}
\begin{enumerate}[(a)]
\item Whenever the scaling $y=\varepsilon y^*$ satisfies local Tikhonov consistency and initial value consistency then system \eqref{sepsysscal} locally has the form
\begin{equation}\label{sepsysscal2}
\begin{array}{rcccl}
\dot x&=& &+&\varepsilon (f_1(x,0)+F_0(x,0)y^*)+\cdots\\
\dot y^*&=& G_0(x,0)y^*+g_1(x,0) &+&\varepsilon\cdots \\
\end{array}
\end{equation}
with suitable matrix-valued functions $F_0$ and $G_0$ and initial conditions $x(0)=x_0$, $y^*(0)=y_0^*$.
\item Assume in addition that there exists $\nu>0$ such that all the eigenvalues of $G_0(x,0)$, $x\in U$, have real part $\leq -\nu$. 
Then the system admits a Tikhonov-Fenichel reduction to the asymptotic slow manifold
\[
Z=\{ (x,\,y^*);\,y^*=-G_0(x,0)^{-1}g_1(x,0)\},
\]
and there exist a neighborhood of $Z$ and $T_2>0$ such that any solution of \eqref{sepsysscal2} that starts in this neighborhood converges (in slow time $\tau=\varepsilon t$) to the solution of the reduced system 
\begin{equation}\label{redtnf}
x^\prime =f_1(x,0)-F_0(x,0)G_0(x,0)^{-1}g_1(x,0), \quad x(0)=x_0,
\end{equation}
uniformly on any interval $\left[T_1, \,T_2\right]$, $0< T_1<T_2$. 
\item The reduction is in agreement with the ``classical'' quasi-steady state reduction of system \eqref{sepsys2} for $y^*$:  Assuming QSS for $y^*$ (hence the rate of change for $y^*$ vanishes) one finds
\[
y^*=-G_0(x,0)^{-1}g_1(x,0)+\varepsilon\cdots,
\]
and substitution into the first equation of \eqref{sepsysscal2} yields \eqref{redtnf} up to corrections of order $\varepsilon$, which are irrelevant for Tikhonov reduction. 
\end{enumerate}
\end{proposition}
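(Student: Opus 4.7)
The plan is to dispatch each part in turn, leaning on the hints the authors have flagged: Hadamard's lemma for (a), a direct invocation of the Tikhonov/O'Malley/Vasil'eva theorem for (b), and a short formal comparison for (c), with the deeper justification of (c) referred to \cite{gwz3}.

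For part (a), local Tikhonov consistency gives $f_0(x,0) = 0$ and $g_0(x,0) = 0$ on $U$, so by Hadamard's lemma (Remark \ref{consrem}(a)) there exist smooth matrix-valued functions $F_0$ and $G_0$ on a neighborhood of $U \times \{0\}$ with $f_0(x,y) = F_0(x,y)\,y$ and $g_0(x,y) = G_0(x,y)\,y$. Differentiating at $y=0$ gives $D_2 f_0(x,0) = F_0(x,0)$ and $D_2 g_0(x,0) = G_0(x,0)$. Substituting $y = \varepsilon y^*$ into \eqref{sepsys} directly, rather than first expanding to \eqref{sepsysscal}, the factorized forms turn $f_0(x, \varepsilon y^*) = \varepsilon F_0(x, \varepsilon y^*)\,y^*$ and $g_0(x, \varepsilon y^*) = \varepsilon G_0(x, \varepsilon y^*)\,y^*$; the $\varepsilon$-factor cancels the nominal $\varepsilon^{-1}$ in the $\dot y^*$ equation, so no singular terms survive. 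Re-Taylor-expanding in $\varepsilon$ and collecting lowest-order terms yields \eqref{sepsysscal2}. The initial conditions $x(0) = x_0$, $y^*(0) = y_0^*$ are exactly the content of initial value consistency.

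For part (b), system \eqref{sepsysscal2} is already in Tikhonov standard form \eqref{tikhostandard} with slow variable $x$ and fast variable $y^*$; the ``fast'' vector field is $\widetilde g_0(x, y^*) := G_0(x,0)\,y^* + g_1(x,0)$. The eigenvalue hypothesis makes $G_0(x,0)$ invertible for each $x \in U$, so $\widetilde g_0(x, y^*) = 0$ has the unique solution $y^* = \phi(x) := -G_0(x,0)^{-1} g_1(x,0)$, giving the critical manifold $Z$. Because $D_2 \widetilde g_0(x, y^*) = G_0(x,0)$, the eigenvalue condition translates verbatim into local exponential attractivity of $Z$. Initial value consistency is precisely what ensures $(x_0, y_0^*)$ remains in a fixed neighborhood as $\varepsilon \to 0$, so Thm.~8.2 of Verhulst \cite{verhulst} applies. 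Substituting $y^* = \phi(x)$ into the slow equation of \eqref{sepsysscal2} yields \eqref{redtnf}.

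Part (c) is the step I expect to require the most care, and where I would lean on \cite{gwz3}. The quick formal check is immediate: setting $\dot y^* = 0$ in \eqref{sepsysscal2} gives $G_0(x,0)\,y^* + g_1(x,0) + O(\varepsilon) = 0$; inverting $G_0(x,0)$ produces $y^* = -G_0(x,0)^{-1} g_1(x,0) + O(\varepsilon)$, and substituting this into the $\dot x$ equation reproduces \eqref{redtnf} up to an $O(\varepsilon)$ correction that is invisible in the Tikhonov limit. The nontrivial issue, as the authors emphasize, is that in general the QSS manifold and the Tikhonov--Fenichel critical manifold need not coincide, nor do the two reductions; here the affine structure of the fast equation in $y^*$ together with the invertibility of $G_0(x,0)$ forces agreement. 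Rather than redo the general analysis of when QSS and singular perturbation reductions coincide, I would simply cite Proposition~5 of \cite{gwz3}.
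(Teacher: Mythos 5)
Your proposal is correct and follows exactly the route the paper indicates: Hadamard's lemma (via Remark~\ref{consrem}(a)) for part~(a), a direct application of Tikhonov's theorem in the form of Verhulst's Thm.~8.1/8.2 for part~(b), and the formal QSS computation together with the citation of Proposition~5 of \cite{gwz3} for part~(c). You merely spell out the details that the authors leave as a one-sentence sketch, and all of those details check out.
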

By Proposition \ref{standardapp}, degenerate scalings yield singular perturbation reductions in the {\em standard setting} of local Tikhonov consistency, with invertible $G_0$ and appropriate eigenvalue conditions. But scaling is not necessary here, since one may directly invoke Tikhonov-Fenichel reduction as stated in \cite{gw2}, Thm.~1. We will outline this approach now; for the reader's convenience we collect some relevant facts used below in the Appendix, in particular Proposition \ref{decompred}. With local Tikhonov consistency system \eqref{sepsys} simplifies to
\begin{equation}\label{sepsys2}
\begin{array}{rcccl}
\dot x&=& F_0(x,y)y&+&\varepsilon f_1(x,y)+\varepsilon^2\cdots,\quad x(0)=x_0\\
\dot y&=& G_0(x,y)y&+&\varepsilon g_1(x,y)+\varepsilon^2\cdots,\quad y(0)=y_0.\\
\end{array}
\end{equation}
\begin{proposition}\label{standardapprem} 
Assume that there exists $\nu>0$ such that all the eigenvalues of $G_0(x,0)$, $x\in U$, have real part $\leq -\nu$.  Then:
\begin{enumerate}[(a)]
\item System \eqref{sepsys2} admits a singular perturbation reduction. The asymptotic slow manifold $\widetilde Z$ is defined, in zero order approximation with respect to $\varepsilon$, by $y=0$. A reduced equation in slow time is given by
\begin{equation}\label{reddirect}
 \begin{array}{rcl}
x^\prime&=&f_1(x,0)-F_0(x,0)G_0(x,0)^{-1}g_1(x,0)\\
y^\prime &=& 0.
\end{array}
\end{equation}
\item In some neighborhood of $\widetilde Z$ there exist $r$ smooth independent first integrals $\psi_1,\ldots,\psi_r$ of the fast system
\[
\begin{array}{rcl}
\dot x&=& F_0(x,y)y\\
\dot y&=& G_0(x,y)y.\\
\end{array}
\]
Whenever $(x_0,y_0)$ lies in the domain of attraction of $\widetilde Z$ then the initial value of the reduced system is locally determined by intersecting $\widetilde Z$ with all the level sets of $\psi_1,\ldots,\psi_r$ that contain $(x_0,y_0)$.
\item Every entry $\psi$ of
\[
x-F_0(x,y)G_0(x,y)^{-1}y
\]
is a first integral of the fast system up to order two in $y$, i.e., the terms in $y$ of orders zero and one of the Lie derivative of $\psi$ vanish. 
In particular, for $y_0=\varepsilon y_0^*$ the initial value of the reduced system equals $x_0$, up to corrections of order $\varepsilon$.
\end{enumerate}
\end{proposition}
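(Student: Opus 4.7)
For part (a), I would begin by observing that system \eqref{sepsys2} is already in the factorised form $h^{(0)}(z) = P(z)\mu(z)$, with $P(z) = (F_0(x,y),\,G_0(x,y))^{\rm tr}$ and $\mu(z) = y$, so that $D\mu = (0,\,I_s)$. The common zero set of $h^{(0)}$ is the $r$-dimensional coordinate subspace $\widetilde Z = \{y=0\}$, and at any point $(x,0)\in\widetilde Z$ the Jacobian of $h^{(0)}$ is block triangular with nonzero eigenvalues equal to those of $G_0(x,0)$, hence of real part $\leq -\nu$. The system therefore satisfies the hypotheses of Proposition \ref{decompred} in the Appendix; a direct evaluation of the projection $I - P(D\mu\,P)^{-1}D\mu$ along $\widetilde Z$ produces $\begin{pmatrix}I_r & -F_0 G_0^{-1}\\ 0 & 0\end{pmatrix}$, and applying this to $h^{(1)} = (f_1,g_1)^{\rm tr}$ yields precisely \eqref{reddirect}.

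For part (b), I would invoke normally hyperbolic invariant manifold theory: every point of $\widetilde Z$ is an equilibrium of the $\varepsilon=0$ fast system and, by the eigenvalue hypothesis, is exponentially attracting in the transverse directions, so a smooth stable foliation of some neighbourhood of $\widetilde Z$ over $\widetilde Z$ exists. Using $x$ as coordinates on $\widetilde Z$, the $r$ components of the projection $\pi$ along stable leaves furnish the asserted smooth independent first integrals $\psi_1,\dots,\psi_r$, whose common level set through $(x_0,y_0)$ meets $\widetilde Z$ at the single point $\pi(x_0,y_0)$; this is exactly the initial value of the reduced system identified by the matching part of Tikhonov's theorem (cf.~Verhulst \cite{verhulst}).

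For part (c), I would compute the Lie derivative of $\psi = x - M(x,y)y$, with $M := F_0 G_0^{-1}$, along the fast vector field $X = (F_0 y,\,G_0 y)^{\rm tr}$: the relation $MG_0 = F_0$ forces the terms $F_0 y$ and $-MG_0 y$ to cancel, leaving $L_X \psi = -(L_X M)\,y$, which is $O(\|y\|^2)$ because $X$ itself vanishes at $y=0$. For the initial-value consequence, I would argue that any true first integral $\psi_i^{\rm true}$ normalised by $\psi_i^{\rm true}(x,0) = x_i$ must, by matching the Taylor coefficients in $L_X\psi_i^{\rm true} = 0$ at order one in $y$, take the form $\psi_i^{\rm true}(x,y) = x_i - (M(x,0)y)_i + O(\|y\|^2)$; evaluating at $(x_0,\varepsilon y_0^*)$ then gives $\pi(x_0,\varepsilon y_0^*) = x_0 + O(\varepsilon)$. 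The main obstacle is the bookkeeping in (c) and, in particular, the step linking the $\|y\|^2$ approximation of $\psi$ to the actual reduced initial value via the unique determination of the first-order Taylor coefficient of $\psi^{\rm true}$; parts (a) and the Lie-derivative cancellation in (c) are essentially mechanical once the factorised form is exposed.
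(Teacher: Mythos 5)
Your proposal is correct and follows essentially the same route as the paper: part (a) uses the identical decomposition $h^{(0)}=P\mu$ with $\mu=y$ and the same projection matrix applied to $(f_1,g_1)^{\rm tr}$; part (b) rests on the Fenichel stable foliation, which is precisely what the paper's citations (Fenichel's Lemma 5.3, \cite{nw11} Prop.~2.2, and \cite{gw2} Prop.~2) supply; and part (c) is the same Lie-derivative cancellation $F_0y - F_0G_0^{-1}G_0y=0$ leaving a term of order two in $y$. The only difference is that you spell out details the paper delegates to references, such as verifying the rank and eigenvalue hypotheses of Proposition \ref{decompred} and matching the first-order Taylor coefficient of the true first integrals.
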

\begin{proof}
 The first part is a straightforward application of Proposition \ref{decompred}. One starts from the decomposition $\widehat h^{(0)}=P\cdot\mu$, with
\[
P=\begin{pmatrix}F_0(x,y)\\G_0(x,y)\end{pmatrix},   \quad\mu=y,\quad D\mu=\begin{pmatrix}0&I_s\end{pmatrix},
\]
and obtains the projection matrix
\[
Q= \begin{pmatrix}I_r&0\\ 0& I_s\end{pmatrix}-\begin{pmatrix}F_0(x,0)\\ G_0(x,0)\end{pmatrix}G_0(x,0)^{-1}\begin{pmatrix}0&I_s\end{pmatrix}=\begin{pmatrix}I_r& -F_0(x,0)G_0(x,0)^{-1}\\ 0 & 0\end{pmatrix}.
\]
Then application of $Q$ to 
\[
\begin{pmatrix}f_1(x,0)\\ g_1(x,0)\end{pmatrix}
\]
yields the reduced system.
As for part (b), the existence of the first integrals is shown in Fenichel \cite{fenichel}, Lemma 5.3 and also in \cite{nw11}, Prop.~2.2. The initial value of the reduced equation is locally determined by the first integrals of the fast system according to \cite{gw2}, Prop. 2. To verify part (c), take the Lie derivative of $x-F_0(x,y)G_0(x,y)^{-1}y$ with respect to the fast system to obtain
\[
\left(F_0(x,y)-F_0(x,y)G_0(x,y)^{-1}G_0(x,y)\right)y-\left(\cdots\right)y,
\]
noting that the term in brackets has at least order one in $y$.
\end{proof}
\begin{remark}\label{initialrem}
\begin{enumerate}[(a)]
\item  Proposition \ref{standardapprem} does not require initial value consistency; in this respect its scope is wider. {For instance, by Proposition \ref{standardapprem} one obtains a correct reduction of the linear system in Example \ref{linex}: The fast part of 
\[
\begin{array}{rcl}
\dot x &=&\varepsilon  ax + by\\
\dot y &=& cy
\end{array}
\]
obviously admits the first integral $\psi=cx-by$; and the level set of the first integral containing the initial value $(x_0,y_0)$ intersects the critical manifold in the point $(x_0-by_0/c, 0)$; this is the appropriate initial value for the reduced equation.
}
\item {Proposition \ref{standardapprem} also shows that the scaling $y=\varepsilon y^*$ remains consistent in the long term, since there exists $T_2>0$ such that in the slow time scale $y$ remains bounded on $[0,\,T_2]$, due to Tikhonov's theorem. (More far-reaching properties, such as boundedness on $[0,\,\infty)$, cannot be expected to hold in general.)}
\item The reduced equations in Propositions \ref{standardapp} and \ref{standardapprem} are identical. Moreover, given initial value consistency, the discrepancy of the initial values for the reduced systems is of order $\varepsilon$ and thus does not affect the convergence statement either.
\item Tikhonov's Theorem (as well as \cite{gw2}, Thm. 1) is stated for initial values that are independent of $\varepsilon$, and thus it is not applicable verbatim for a system \eqref{sepsys2} with initial value consistency. But it can obviously be adapted for this case, the point being that it holds for all initial values sufficiently close to the asymptotic slow manifold. 
\item Proposition \ref{standardapprem} provides only the order zero approximation $\widetilde Z$ of the slow manifold. The first order approximation $Z$ may be determined by Proposition \ref{smprop} in the Appendix, and turns out to be
  \begin{align*}
   y=-\varepsilon\, G_0(x,0)^{-1}g_1(x,0)
  \end{align*}
in agreement with Proposition \ref{standardapp}.
\end{enumerate}
\end{remark}

{To summarize: Given the hypotheses of Proposition \ref{standardapp} one may invoke degenerate scalings, but this just amounts to a singular perturbation reduction with critical manifold $y=0$ for the unscaled system. There seems to be no particular benefit from the scaling approach, and moreover one is burdened with the additional requirement of initial value consistency. However, matters turn out to be different in nonstandard settings.}

\subsection{Reduction: The nonstandard case}\label{secnonstandard}
Here we will consider system \eqref{sepsys}, with local Tikhonov consistency and initial value consistency for the scaling but we do not assume invertibility of $G_0(x,0)$ (in the notation of \eqref{sepsys2}). We will discuss under what conditions the transformed system admits a singular perturbation reduction, and determine the reduction. The ``classical'' QSS reduction, as in Proposition \ref{standardapp}(c), is not applicable here. \\
We first illustrate by example that degenerate scaling may be useful when $G_0(x)$ is not invertible. 
\begin{example}\label{mmivex}
\em{Consider the three-dimensional Michaelis-Menten equation \eqref{mmirr}. Assuming initial value consistency for $e$ and $c$, scaling yields the system
\begin{equation}\label{mmirrsc}
\begin{array}{rcl}
\dot{s}&=&\varepsilon(-k_1e^*s+k_{-1}c^*)\\
\dot{e^*}&=&-k_1e^*s +(k_{-1}+k_2)c^*\\
\dot{c^*}&= &k_1e^*s -(k_{-1}+k_2)c^*\\
\end{array}
\end{equation}
in Tikhonov standard form, with
\[
G_0=\begin{pmatrix}-k_1s & k_{-1}+k_2\\
                                  k_1s & -(k_{-1}+k_2)\end{pmatrix}
\]
not invertible. {(The unscaled system does not admit a reduction by Proposition \ref{decompred}, since the direct sum condition (ii) is not satisfied.)} However, there exists a Tikhonov-Fenichel reduction which we now determine. Adopting the notation from the Appendix, Proposition \ref{decompred}, we have
\[
h^{(0)}=\begin{pmatrix}0\\ 1\\ -1\end{pmatrix}\cdot\left(k_1e^*s-(k_{-1}+k_2)c^*\right)=:P\cdot \mu, \quad h^{(1)}=\begin{pmatrix}-k_1e^*s+k_{-1}c^*\\ 0 \\0\end{pmatrix}.
\]
With $D\mu\cdot P=-(k_1s+k_{-1}+k_2)<0$  we obtain a reduction to a two-dimensional system on the manifold defined by $\mu=0$. (Tikhonov yields no immediate reduction to a one-dimensional equation.) Straightforward computations yield the projection matrix
\[
Q=\frac1{d}\cdot\begin{pmatrix}d&0&0\\ -k_1e^* & k_{-1}+k_2 &k_{-1}+k_2\\
                                                                   k_1e^* & k_1s & k_1s\end{pmatrix} \qquad \text{with  }d:=k_1s+k_{-1}+k_2, 
\]
and the reduced system
\begin{equation}\label{mmcomplicated}
\begin{array}{rcl}
s^\prime&=& -k_1e^*s+k_{-1}c^*\\
{e^*}^\prime&=&-\frac{k_1e^*}{d}\left(-k_1e^*s+k_{-1}c^*\right)\\
{c^*}^\prime&=&\phantom{-}\frac{k_1e^*}{d}\left(-k_1e^*s+k_{-1}c^*\right).\\
\end{array}
\end{equation}
In view of $\mu=0$ we may substitute $c^*=k_1e^*s/(k_{-1}+k_2)$. Moreover system \eqref{mmcomplicated} admits the first integral $e^*+c^*$, and the additional relation $e^*+c^*=e_0^*$ finally provides the familiar Michaelis-Menten equation for $s$. \\
In contrast, consider a variant of \eqref{mmirr}, viz.
\begin{equation}\label{mmirrdeg}
\begin{array}{rcl}
\dot{s}&=&-k_1es+k_{-1}c\\
\dot{e}&=&-k_1es +(k_{-1}+k_2)c-\varepsilon \delta^*e\\
\dot{c}&= &k_1es -(k_{-1}+k_2)c\\
\end{array}
\end{equation}
with slow degradation of free enzyme. The matrix $G_0$ is as before, and the same scaling as before now yields the reduced system
\begin{equation}\label{mmcomplicateddeg}
\begin{array}{rcl}
s^\prime&=& -k_1e^*s+k_{-1}c^*\\
{e^*}^\prime&=&-\frac{k_1e^*}{d}\left(-k_1e^*s+k_{-1}c^*+\frac{\delta^*(k_{-1}+k_2)}{k_1}\right)\\
{c^*}^\prime&=&\phantom{-}\frac{k_1e^*}{d}\left(-k_1e^*s+k_{-1}c^*-\delta^*s\right)\\
\end{array}
\end{equation}
on the manifold defined by $\mu=0$. Substituting for $c^*$ one obtains the two-dimensional system
\[
\begin{array}{rcl}
s^\prime&=&-\frac{k_1k_2}{k_{-1}+k_2}e^*s\\
{e^*}^\prime&=&-\frac{k_1}{k_1s+k_{-1}+k_2}e^*\cdot\left(-\frac{k_1k_2}{k_{-1}+k_2}e^*s+\delta^*\frac{k_{-1}+k_2}{k_1}\right)\\
\end{array}
\]
for which no further explicit reduction is apparent.\\
There is a crucial difference between these systems: For system \eqref{mmirr} one may first  employ conservation of $e+c$, and then reduce as in Proposition \ref{standardapp} or \ref{standardapprem}.  But this shortcut is not available for system \eqref{mmirrdeg}.}
\end{example}
One may consider the above as instances of a higher order reduction procedure which starts with a degenerate scaling and provides a reduction with the dimension of the asymptotic slow manifold greater than $r=n-s$ (notation as in \eqref{parteq}.) Generally one can state this as follows.

\begin{proposition}\label{nonstprop}
Consider system \eqref{sepsys} with $f_0(x,0)=0$ and $g_0(x,0)=0$, thus
\begin{equation}\label{intermed}
\begin{array}{rcl}
\dot x&=& F_0(x,y)y+\varepsilon f_1(x,y)+\varepsilon^2f_2(x,y)+\cdots\\
\dot y&=& G_0(x,y)y+\varepsilon g_1(x,y)+\varepsilon^2g_2(x,y)+\cdots
\end{array}
\end{equation}
and also assume initial value consistency.
\begin{enumerate}[(a)]
\item The scaled system then is given by
\begin{equation}\label{intermedsc}
\begin{array}{lcccc}
\dot x&=&   && \varepsilon (f_1(x,0)+F_0(x,0)y^*)+\cdots\\
\dot y^*&=&G_0(x,0)y^*+g_1(x,0)&+&\varepsilon \bar g_1(x,y^*)+\cdots,\\
\end{array}
\end{equation}
with
\[
   \bar g_1(x,y^*):=g_2(x,0)+D_2g_1(x,0)y^*+2D_2^2G_0(x,0)(y^*,y^*),
\]
and $D_2^2G_0$ denoting the second derivative of $G_0$ with regard to $y$.
\item {When $0<s_1:={\rm rank}\,G_0(x,\,0)<s$ for all $x$ in an open subset $\widetilde U\subseteq U$ then near every $x_0\in U$ there exist $\widetilde G_0(x)\in \mathbb R^{(s,s_1)}$ and $R(x)\in\mathbb R^{(s_1,s)}$, both of rank $s_1$, such that
\[
\begin{pmatrix}0\\ G_0(x,0)\end{pmatrix}=\begin{pmatrix} 0\\ \widetilde G_0(x)\end{pmatrix}\cdot R(x).
\]
If there exists a function $w(x)$ on $\widetilde U$ such that $g_1(x,0)=G_0(x)\,w(x)$ then the
 equation $R(x)(y^*+w(x))=0$  determines an $(n-s_1)$--dimensional submanifold $\widehat Z$}, and \eqref{intermedsc} locally admits a Tikhonov-Fenichel reduction with an attracting asymptotic slow manifold $\widehat Z$ if and only if there is a $\nu>0$ such that all eigenvalues of $R\cdot \widetilde G_0$ have real parts $\leq -\nu$.
\end{enumerate}
\end{proposition}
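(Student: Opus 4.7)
For part (a) my plan is to substitute $y=\varepsilon y^*$ directly into \eqref{intermed} and Taylor expand each of $F_0(x,\varepsilon y^*)$, $G_0(x,\varepsilon y^*)$, $g_1(x,\varepsilon y^*)$ and $g_2(x,\varepsilon y^*)$ about $y=0$ in powers of $\varepsilon y^*$. Using $f_0(x,y)=F_0(x,y)y$ and $g_0(x,y)=G_0(x,y)y$ (valid by Hadamard's lemma thanks to local Tikhonov consistency), the $\dot x$-equation picks up an overall factor $\varepsilon$ and becomes the stated slow equation. The $\dot y^*$-equation, after multiplying through by $\varepsilon^{-1}$, acquires the fast part $G_0(x,0)y^*+g_1(x,0)$ at order $\varepsilon^0$; the $O(\varepsilon)$ correction $\bar g_1$ is then obtained by collecting the order-$\varepsilon$ contributions coming from $G_0(x,\varepsilon y^*)y^*$, $g_1(x,\varepsilon y^*)$ and $\varepsilon g_2(x,\varepsilon y^*)$.

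For part (b) my strategy is to cast the unperturbed part of the scaled system into the factored form required by Proposition \ref{decompred}. At $\varepsilon=0$ the right-hand side of \eqref{intermedsc} is
\[
h^{(0)}(x,y^*)=\begin{pmatrix}0\\ G_0(x,0)y^*+g_1(x,0)\end{pmatrix}.
\]
Combining the rank factorization $G_0(x,0)=\widetilde G_0(x)R(x)$ with the hypothesis $g_1(x,0)=G_0(x,0)w(x)=\widetilde G_0(x)R(x)w(x)$, I rewrite this as
\[
h^{(0)}=\begin{pmatrix}0\\ \widetilde G_0(x)\end{pmatrix}\cdot R(x)\bigl(y^*+w(x)\bigr)=:P(x)\,\mu(x,y^*),
\]
with $P\in\mathbb{R}^{(n,s_1)}$ of full column rank $s_1$ and $\mu\colon\mathbb{R}^n\to\mathbb{R}^{s_1}$ a submersion because $R$ has constant rank $s_1$. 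The zero set of $\mu$ is then locally a smooth submanifold of dimension $n-s_1$; a direct application of the implicit function theorem to $R(x)(y^*+w(x))=0$ identifies it with $\widehat Z$.

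To invoke Proposition \ref{decompred} I still need to check the nondegeneracy/attractivity condition on $D\mu\cdot P$ restricted to $\widehat Z$. Since the first $r$ rows of $P$ vanish, only the last $s$ columns of $D\mu$, which equal $R(x)$, contract with the nontrivial block $\widetilde G_0(x)$ of $P$, giving the $y^*$-independent matrix
\[
D\mu(x,y^*)\cdot P(x)=R(x)\widetilde G_0(x)\in \mathbb{R}^{(s_1,s_1)}.
\]
The invertibility requirement and the eigenvalue requirement of Proposition \ref{decompred} therefore coincide with the assumption that all eigenvalues of $R\widetilde G_0$ have real parts $\leq -\nu$, and the latter bound persists uniformly on $\widehat Z$. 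Conversely, if some eigenvalue of $R\widetilde G_0$ has nonnegative real part, then $D\mu\cdot P$ either fails to be invertible or fails to be attractive, so the reduction hypothesis breaks down. This yields the asserted \emph{if and only if}.

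The main obstacle I expect is purely bookkeeping: (i) ensuring smooth local existence of the rank decomposition $G_0(x,0)=\widetilde G_0(x)R(x)$, which follows from the constant-rank hypothesis by smoothly choosing bases for image and coimage of $G_0(x,0)$ near $x_0$; and (ii) confirming that the spectral hypothesis on the square matrix $R\widetilde G_0$ truly corresponds to attractivity of $G_0(x,0)$ on the transverse directions to $\ker G_0(x,0)$. The latter is immediate from the classical identity that the nonzero eigenvalues of $\widetilde G_0 R=G_0(x,0)$ and of $R\widetilde G_0$ coincide, so no genuine obstruction arises.
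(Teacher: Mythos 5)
Your proposal is correct and follows essentially the same route as the paper: part (a) by direct Taylor expansion in $y$ after the substitution $y=\varepsilon y^*$, and part (b) by producing the rank factorization $G_0(x,0)=\widetilde G_0(x)R(x)$ (the paper constructs it explicitly as $\widetilde G_0\cdot(A,\,I_{s_1})$ from $s_1$ independent columns), setting $P=\begin{pmatrix}0\\ \widetilde G_0\end{pmatrix}$, $\mu=R(x)(y^*+w(x))$, computing $D\mu\cdot P=R\widetilde G_0$, and invoking Proposition \ref{decompred}. No gaps of substance.
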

\begin{proof} Part (a) is straightforward from Taylor expansion with respect to $y$.
It is sufficient to prove part (b) for a neighborhood of any $\widehat x\in U$. Here we may assume that the last $s_1$ columns of $G_0$ are linearly independent, and define $\widetilde G_0\in \mathbb R^{(s,s_1)}$ as the matrix with these columns. Then the matrix consisting of the first $s-s_1$ columns of $G_0$ (each being a linear combination of the columns of $\widetilde G_0$) may be expressed in the form $\widetilde G_0\cdot A$, with $A(x)\in\mathbb R^{(s_1,s-s_1)}$ for every $x$, and the first assertion follows from
\[
G_0=\widetilde G_0\cdot \left( A\,,\, I_{s_1}\right)=:\widetilde G_0\cdot R.
\]
{Concerning the existence criterion for a Tikhonov-Fenichel reduction see Proposition \ref{decompred} and \cite{gw2}, Remark 4, noting 
\[
P(x,y^*)=\begin{pmatrix}0\\ \widetilde G_0(x)\end{pmatrix},\quad \mu(x,y^*)=R(x)(y^*+w(x)), 
\]
and 
\[
D\mu(x,y^*)=\left(\ast,\, R(x)\right), \quad D\mu(x,y^*)P(x,y^*)=R(x)\cdot \widetilde G_0(x).
\]
}
\end{proof}
{We give some illustrative examples.}
\begin{example}\label{mmivexcont}\em{
\begin{itemize}
\item  Continuing Example \ref{mmivex} and following the steps in the proof above, one obtains
\[
G_0=\begin{pmatrix}-k_1s & k_{-1}+k_2\\
                                  k_1s & -(k_{-1}+k_2)\end{pmatrix} =\begin{pmatrix}k_{-1}+k_2\\
                                   -(k_{-1}+k_2)\end{pmatrix}\cdot\begin{pmatrix}\frac{-k_1s}{k_{-1}+k_2}, & 1\end{pmatrix};
\]
a slightly different version of the decomposition.
\item To illuminate the effect of scaling on critical manifolds, consider a system
\[
\begin{pmatrix}0&A_{12}\\ 0 & A_{22}\end{pmatrix}\cdot\begin{pmatrix}x\\y\end{pmatrix}+O(\varepsilon)
\]
with constant matrices.
Upon scaling this becomes
\[
\begin{pmatrix}0&0\\ 0 & A_{22}\end{pmatrix}\cdot\begin{pmatrix}x\\y^*\end{pmatrix}+\begin{pmatrix}0\\g_1(x,0)\end{pmatrix}+O(\varepsilon)
\]
For sake of simplicity, let $g_1(\ast,0)=0$. Then, whenever the kernel of $A_{22}$ does not have full rank and its kernel is not contained in the kernel of $A_{12}$, the dimension of the critical manifold increases.
\end{itemize}
}
\end{example}
We refer to section \ref{appsec} for application-relevant examples.
\begin{remark}\label{justscal}
Proposition \ref{nonstprop} also guarantees consistency of the scaling  $y^*=y/\varepsilon$ in the long term. Indeed, by Tikhonov there exist a neighborhood of the critical manifold and some $T_2>0$ such that any solution of  \eqref{intermedsc} with initial value in this neighborhood converges uniformly to a solution of the reduced system on any closed subinterval of $(0,T_2]$. In particular this implies boundedness of $y^*$ for $0\leq t\leq T_2$.
Moreover, if the hypotheses of Hoppensteadt's Theorem (see Hoppensteadt \cite{Hoppensteadt}, as well as a specialization in  \cite{lws} for autonomous systems) are satisfied und thus uniform convergence holds on every closed subinterval of $(0,\infty)$, the same argument shows that the scaling is consistent for all positive times $\tau$.
\end{remark}

\subsection{Conservation of first integrals}
First integrals of singularly perturbed systems are generally preserved (if possibly trivialized) by Tikho\-nov-Fenichel reductions; see Appendix, Proposition \ref{ersteint}. In the following, we examine the effect of degenerate scaling on first integrals.

\begin{proposition}\label{firstint}
Let system \eqref{sepsys} be given, with local consistency and initial value consistency. Furthermore assume that the scaled system \eqref{intermedsc} admits a Tikhonov-Fenichel reduction in the sense of Proposition \ref{decompred} in the Appendix, with reduced system
\begin{equation}\label{rederstint}
\begin{array}{rcccl}
 x'&=& p(x,y^*)\\
 {y^*}'&=& q(x,y^*) \\
\end{array}
\end{equation}
(in slow time) on the asymptotic slow manifold $\widetilde Z$. 
\begin{enumerate}[(a)]
\item Whenever $\varphi$ is smooth on a neighborhood of $U\times V \times [0,\varepsilon_0]$ and $\varphi(\cdot,\,\cdot,\,\varepsilon)$ is a first integral of \eqref{sepsys} for every $0<\varepsilon <\varepsilon_0$, the function $\widetilde\varphi\colon\widetilde Z\to \R,\ \widetilde\varphi(x,y^*):= \varphi(x,0,0)$ is a first integral (possibly constant) of \eqref{rederstint}. 
\item Moreover, if 
  \begin{equation}\label{hilfe}
   \varphi(x,\varepsilon y^*,\varepsilon)=\varepsilon \varphi^*(x,y^*)+\varepsilon^2(\cdots)
  \end{equation}
 holds for all $x,\,y^*,\,\varepsilon$ then $\varphi^*(x,y^*)$ is a first integral of \eqref{rederstint} (possibly constant) on $\widetilde Z$.
\end{enumerate}
\end{proposition}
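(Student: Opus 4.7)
The plan is to pull $\varphi$ back to a first integral of the scaled system \eqref{intermedsc}, then invoke Proposition~\ref{ersteint} in the Appendix (which guarantees preservation of first integrals under Tikhonov-Fenichel reduction). Define $\Phi(x,y^*,\varepsilon):=\varphi(x,\varepsilon y^*,\varepsilon)$, smooth on a neighborhood of $U\times V\times[0,\varepsilon_0]$.

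First I would verify that $\Phi(\cdot,\cdot,\varepsilon)$ is a first integral of the scaled system for every $\varepsilon>0$. In the fast time scale the scaled equations read $\dot x=f(x,\varepsilon y^*,\varepsilon)$ and $\dot y^*=\varepsilon^{-1}g(x,\varepsilon y^*,\varepsilon)$; the chain rule gives
\[
  \tfrac{d}{dt}\Phi(x,y^*,\varepsilon)=D_x\varphi\cdot f+\varepsilon D_y\varphi\cdot\varepsilon^{-1}g=D_x\varphi\cdot f+D_y\varphi\cdot g,
\]
evaluated at $(x,\varepsilon y^*,\varepsilon)$, which vanishes by hypothesis on $\varphi$. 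Because $\Phi$ is smooth up to $\varepsilon=0$, Proposition~\ref{ersteint} applies to \eqref{intermedsc} and produces a first integral $\Phi(x,y^*,0)$ (possibly constant) of the reduced system \eqref{rederstint} on $\widetilde Z$. For part~(a) this reads $\widetilde\varphi(x,y^*)=\varphi(x,0,0)=\Phi(x,y^*,0)$, and the claim is settled.

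For part~(b) the hypothesis \eqref{hilfe} forces $\Phi(x,y^*,0)=0$, so Proposition~\ref{ersteint} applied to $\Phi$ itself only produces the trivial integral. The remedy is to observe that $\varepsilon$ is itself a constant of motion, so $\varepsilon^{-1}\Phi$ is again a first integral of \eqref{intermedsc} for every $\varepsilon>0$; by \eqref{hilfe} it extends smoothly across $\varepsilon=0$ to the value $\varphi^*(x,y^*)$. A second application of Proposition~\ref{ersteint} then shows that $\varphi^*$ is a first integral of \eqref{rederstint} on $\widetilde Z$.

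The argument is essentially bookkeeping and there is no real obstacle. The one point that merits care is the smooth extension of $\varepsilon^{-1}\Phi$ across $\varepsilon=0$ in part~(b): without the precise form of \eqref{hilfe} the division by $\varepsilon$ would destroy the smoothness required by Proposition~\ref{ersteint}, so the hypothesis is used in an essential way.
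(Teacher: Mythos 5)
Your argument is correct and follows essentially the same route as the paper: pull $\varphi$ back to the first integral $\varphi(x,\varepsilon y^*,\varepsilon)$ of the scaled system, apply Proposition \ref{ersteint}, and for part (b) divide by $\varepsilon$ and use \eqref{hilfe} to retain smoothness at $\varepsilon=0$ before applying Proposition \ref{ersteint} a second time. The explicit chain-rule verification and the remark that the first application yields only the trivial integral in case (b) are useful elaborations, but the substance is identical to the paper's proof.
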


\begin{proof}
 By Proposition \ref{ersteint}, a first integral $\psi(x,y^*,\varepsilon)$ of \eqref{intermedsc} induces a first integral $\widetilde \psi(x,y^*):=\psi(x,y^*,0)$ of \eqref{rederstint}. Since $\widehat \varphi(x,y^*,\varepsilon):=\varphi(x,\varepsilon y^*,\varepsilon)$ is a first integral of \eqref{intermedsc}, the first assertion follows.\\ If condition \eqref{hilfe} is satisfied then $\varepsilon^{-1}\varphi(x,y^*,\varepsilon)$ is smooth and constant or a first integral of \eqref{intermedsc}. Hence $\varphi^*(x,y^*)$ is a first integral of \eqref{rederstint}.
\end{proof}

\begin{remark}
\begin{enumerate}[(a)]
\item The requirement that $\varphi$ is smooth can be relaxed considerably; see Appendix, Remark \ref{firstintrem}. But in applications to reaction networks one usually deals with smooth first integrals.
\item A conserved first integral that fits part (b) of the Proposition appears in Example \ref{mmivex}.
\end{enumerate}
\end{remark}


\section{Determination of consistent scalings}
Up to this point we assumed that some degenerate scaling of a system was given in advance. Now we turn to determining all such scalings for a general parameter dependent polynomial differential equation
\begin{equation}\label{genpar}
\dot x= h(z,\pi),\quad z\in\mathbb R^n,\, \pi\in\mathbb R^m.
\end{equation}
We first recall that all singular perturbation reductions of such systems can be determined. In order to rewrite such a system the form \eqref{specpar}, fix a ``suitable'' parameter value $\widehat \pi$, choose some $\rho\in\mathbb R^m$ and set
\[
\widehat h(z,\varepsilon):=h(z,\widehat \pi +\varepsilon\rho) =h(z,\widehat\pi)+\varepsilon \cdots.
\]
{In the present context, a parameter value $\widehat\pi$ is ``suitable'' if the perturbed system admits a reduction via the classical theorem of Tikhonov and Fenichel. Following \cite{gwz}, where precise definitions and characterizations are given, we speak of {\em Tikhonov-Fenichel parameter values (TFPV)}.  By \cite{gwz}, all TFPV can be determined, in principle, using methods from algorithmic algebra. But this general method becomes unfeasible for large $n$ or $m$, hence more restricted approaches and heuristics remain relevant. The search for locally Tikhonov consistent scalings a priori amounts to searching for critical manifolds that are coordinate subspaces. For chemical reaction networks this particular approach is motivated by QSS for chemical species. }\\
When a system of the form \eqref{specpar} and a ``small parameter'' are given at the start, there remains the relatively straightforward task of checking all coordinate subspaces for applicability of Proposition \ref{standardapp} or Proposition \ref{nonstprop}. 
For the general case \eqref{genpar} one has to combine possible scalings with an a priori designation of Tikhonov-Fenichel parameter values; we consider just one special instance of the latter.

\subsection{Given small parameter}
 {We consider a system
\eqref{specpar}. This system may originate from \eqref{genpar} by fixing a TFPV, or from a system with known reaction rates by choosing some threshold that separates large and small parameters. We include the case that $\widehat h=\widehat h^{(0)}$ is independent of $\varepsilon$. Due to Definition \ref{consdef}, finding all locally Tikhonov consistent (LTC) scalings is equivalent to finding all partitions}
\[ 
z=\begin{pmatrix}x\\ y\end{pmatrix} \text{  such that   }\,\widehat h^{(0)}(\begin{pmatrix}x\\ 0\end{pmatrix})=0,
\]
possibly involving a relabelling of entries. We write
\[
\widehat h^{(0)}(z)=\left(\sum_{d\geq 1}\,\,\sum_{\ell_1+\cdots+\ell_n=d} {a^i}_{d,\ell_1,\ldots,\ell_n}z_1^{\ell_1}\cdots z_n^{\ell_n}\right)_{1\leq i\leq n},
\]
with real coefficients ${a^i}_{d,\ell_1,\ldots,\ell_n}$, starting from $d= 1$ since constant terms preclude the existence of LTC scalings. Given $0<s<n$ and $1\leq j_1<\cdots <j_s\leq n$, we call $\{j_1,\ldots,j_s\}$ an {\em LTC index set} (and we call $\{z_{j_1},\ldots, z_{j_s} \}$ an {\em LTC variable set}) whenever $z_{j_1}=\cdots z_{j_s}=0$ implies that $\widehat h^{(0)}(z)=0$. We briefly speak of either as an {\em LTC set}. In the partitioning \eqref{parteq} of $z$ the LTC variables correspond to $y$, hence one may think of them as fast variables. A straightforward search for LTC sets would require the discussion of $2^n-2$ cases; an improvement is based on some simple observations.
\begin{remark} \label{ltcrem}
\begin{enumerate}[(a)] 
\item When the monomial $z_1^{k_1}\cdots z_n^{k_n}$ appears in $\widehat h^{(0)}$ with a nonzero coefficient then every LTC index set contains some $j$ such that $k_j>0$. 
\item In particular, any variable that appears in the linear part of $\widehat h^{(0)}$ with a nonzero coefficient is necessarily an LTC variable.
\item Conversely, the complement $\{ i_1,\ldots, i_r\}$ of any LTC index set is characterized by the property that every nonconstant monomial in $z_{i_1},\ldots,z_{i_r}$ appears with coefficient zero. 
\item To find complements of LTC sets, first determine the set $S$ of all indices $i$ such that all powers $z_i^k$ occur only with coefficient $0$ in $\widehat h^{(0)}$. Then the complement of $S$ is contained in every LTC set; in other words, slow variables necessarily lie in $S$.
\item Every superset of an LTC set is (equal to $\{1,\ldots, n\}$ or) an LTC set. Therefore one may focus on minimal LTC sets (or maximal complements).
\end{enumerate}
 \end{remark}
\begin{example} Reactions of first and second order. {\em Many chemical reaction networks involve only reactions of order one or two. For the corresponding systems of degree two one may proceed as follows.
\begin{itemize}
\item First determine the set $S$ according to Remark \ref{ltcrem} (d).
\item For every $i\in S$ the complement of $\{i\}$ is an LTC set.
\item Extending complements: Let $\{i_1,\ldots,i_p\}\subseteq S$ be complementary to an LTC set, thus all $z_{i_k}z_{i_\ell}$, $k,\ell\in\{1,\ldots p\}$ appear only with coefficient $0$. If there is $i_{p+1}\in S$ such that  all $z_{i_k}z_{i_{p+1}}$, $k\in\{1,\ldots p\}$ appear only with coefficient $0$ then the complement of $\{i_1,\ldots,i_p, i_{p+1}\}$ is an LTC set. Otherwise $\{i_1,\ldots,i_p\}$ is maximal, and its complement is a minimal LTC set.

\end{itemize}
}
\end{example}
We give a small example for illustration.
\begin{example}{\em  Given the three-dimensional Michaelis-Menten system \eqref{mmirr}, designate the complete right-hand side as the fast part. Then $S=\{e,s\}$, and since $es$ occurs with nonzero coefficient, $\{e\}$ and $\{s\}$ are the only maximal complements to LTC sets. Thus we find LTC sets $\{e,c\}$ (see Example \ref{mmivex}), and $\{s,c\}$ , which also leads to a Tikhonov-Fenichel reduction (with trivial reduced equation).}
\end{example}
The observations in Remark \ref{ltcrem} obviously lay the groudwork for an algorithmic approach, which will be taken up elsewhere. We also note that initial value consistency may further restrict the possible LTC variables. 
\subsection{Unknown small parameter}
Generally we have a parameter dependent polynomial system \eqref{genpar}, 
written as
\[
h(z,\pi)=\left(\sum_{d\geq 0}\,\,\sum_{\ell_1+\cdots+\ell_n=d} {p^{(i)}}_{d,\ell_1,\ldots,\ell_n}(\pi)z_1^{\ell_1}\cdots z_n^{\ell_n}\right)_{1\leq i\leq n},
\]
with polynomials ${p^{(i)}}_{d,\ell_1,\ldots,\ell_n}$. From this vantage point there exists no a priori small/large separation of coefficients, and the objective is to simultaneously determine candidates $\widehat\pi$ for Tikhonov-Fenichel parameter values and corresponding LTC variable sets. The summation starts at $d=0$ here, since constant terms may appear in $h$ (although they must vanish at $\widehat\pi$). We do not attempt a complete analysis here but just make a few observations.
\begin{remark}
\begin{enumerate}[(a)]
\item Let $\widehat\pi$ be a TFPV of \eqref{genpar} with LTC variable set $\{z_{j_1},\ldots, z_{j_s} \}$. Then ${p^{(i)}}_{d,k_1,\ldots,k_n}(\widehat\pi)=0$ whenever all $q$ with $k_q>0$ lie in the complement of $\{{j_1},\ldots, {j_s} \}$. Hence the corresponding coefficient of $z_1^{k_1}\cdots z_n^{k_n}$ is of order $\varepsilon$ in $h(z,\widehat\pi+\varepsilon\rho)$. From this observtion one may start an exhaustive case-by-case analysis (with ascending degree $d=0,\,1,\,2\ldots$).
\item From an applied perspective it is of interest to consider a restricted approach with
{\em pre-assigned LTC variables:} Given fixed indices $1\leq j_1<\cdots<j_s\leq n$, determine $\widehat\pi$ so that $\{j_1,\ldots,j_s\}$ is an LTC index set; equivalently ${p^{(i)}}_{d,k_1,\ldots,k_n}(\widehat\pi)=0$ whenever $k_{j_1}=\cdots =k_{j_s}=0$. In different terminology, determine parameter conditions such that $z_{j_1},\ldots,z_{j_s}$ are in steady state at $\widehat \pi$, hence in quasi-steady state at $\pi=\widehat\pi+\varepsilon\pi^*+\cdots$.
\end{enumerate}
\end{remark}
The algebraic problem of finding the common roots $\widehat\pi$ of a given collection of ${p^{(i)}}_{d,k_1,\ldots,k_n}$ is amenable to algorithmic methods, for instance using Groebner bases; compare \cite{gwz3}, Appendix A 5.2 (in particular Remark 4). \\
Again we consider the Michaelis-Menten system to illustrate the approach.
\begin{example}{\em 
For system \eqref{mmirr} with $\pi=(k_1,k_{-1}, k_2)$, we discuss all possible assignments of LTC variable sets.
\begin{enumerate}[(i)]
\item  Pre-assigned LTC variable set $\{ s\}$, thus $s=\varepsilon s^*$. Then the first equation of  \eqref{mmirr} forces $\widehat k_{-1}=0$; no further conditions need to be imposed. The scaled system
\[
\begin{array}{rcl}
\dot{s^*}&=&-k_1es^*+k^*_{-1}c\\
\dot{e}&=&-\varepsilon k_1es^* +(\varepsilon k^*_{-1}+k_2)c\\
\dot{c}&= &\varepsilon k_1es^* -(\varepsilon k^*_{-1}+k_2)c\\
\end{array}
\]
admits a singular perturbation reduction (via Proposition \ref{decompred}) to the critical manifold defined by $s^*=c=0$, with trivial reduced system. There exists no Tikhonov-Fenichel reduction with critical manifold given by $e=c=0$.
\item  Pre-assigned LTC variable set $\{ e\}$, thus $e=\varepsilon e^*$. The second equation of  \eqref{mmirr} forces $\widehat k_{-1}+\widehat k_2=0$, hence $\widehat k_{-1}=\widehat k_2=0$ due to nonnegativity. This yields $k_{-1}=\varepsilon k^*_{-1}$ and $k_2=\varepsilon k^*_2$. The system admits a singular perturbation reduction with critical manifold given by $k_1e^*s=(k^*_{-1}+k^*_2)c$.
\item  Pre-assigned LTC variable set $\{ c\}$, thus $c=\varepsilon c^*$. The third equation in \eqref{mmirr} forces $\widehat k_1=0$; the scaled system admits a Tikhonov-Fenichel reduction.
\item   Pre-assigned LTC variable set $\{ s, e\}$, thus $s=\varepsilon s^*$ and $e=\varepsilon e^*$. One obtains the further conditions $k_{-1}=\varepsilon k_{-1}^*$ and $k_2=\varepsilon k_2^*$, but no Tikhonov-Fenichel reduction exists.
\item   Pre-assigned LTC variable set $\{ s, c\}$, thus $s=\varepsilon s^*$ and $c=\varepsilon c^*$. No further conditions need to be imposed on the rate constants; there exists a singular perturbation reduction with trivial reduced equation.
\item   Pre-assigned LTC variable set $\{ e, c\}$, thus $e=\varepsilon e^*$ and $c=\varepsilon c^*$. No further conditions need to be imposed on the rate constants; the reduction is known from Example \ref{mmivex}.
\end{enumerate}
}
\end{example}
To summarize, the approach produces hits and misses, but the point is that it is systematic.

\section{Applications}\label{appsec}
\subsection{Intermediate species in PTM networks}
Intermediates in reaction networks are of particular interest for the computation of stationary points, and also for the computation of quasi-steady state reductions, because they allow linear elimination of variables; see Feliu and Wiuf \cite{fewi12,fewi13},  Gunawarenda \cite{gunaw}, and Saez et al. \cite{sawife}. Recall that in classical QSS reduction, a (formally) reduced system is obtained via algebraic elimination of fast species. We discuss these reductions from the perspective of degenerate scalings, including convergence issues.

By definition and in view of Remark \ref{ltcrem}(b), an intermediate species must correspond to a LTC variable whenever it appears with nonzero coeficient in the fast part of the reaction system. \\
We will discuss post-translational modification (PTM) networks in some detail, based on Feliu and Wiuf \cite{fewi13}, Gunawardena \cite{gunaw}. The species in a PTM network are of two types: Substrates (with concentrations we call $u_1,\ldots, u_N$) and intermediates (with concentrations $v_1,\ldots, v_P$). The differential equation for a PTM system with mass action kinetics has the form
\begin{equation}\label{ptmeq}
\begin{array}{rcccl}
\dot u_i&=&\sum_{j=1}^N\sum_{k=1}^P\theta_{ij}\left(-a_{ij}^ku_iu_j+b_{ij}^kv_k\right)&+&\sum_{j=1}^N\left(d_{ji}u_j-d_{ij}u_i\right)\\
\dot v_k&=&\sum_{j=1}^N\sum_{i=1}^j \left(a_{ij}^ku_iu_j-b_{ij}^kv_k\right)&+&\sum_{\ell=1}^P\left(c_{\ell k}v_\ell-c_{k\ell}v_k\right)\\
\end{array}
\end{equation}
with the coefficients satisfying the following conditions:
\begin{enumerate}[(i)]
\item$\theta_{ij}=1$ whenever $i\not=j$, and $\theta_{ii}=2$; 
\item Rate constants: All $a_{ij}^k=a_{ji}^k\geq 0$, all  $b_{ij}^k=b_{ji}^k\geq 0$; moreover all $d_{ij}\geq 0, \,d_{ii}=0$ and $c_{k\ell}\geq 0,\,c_{kk}=0$;
\item Every $v_k$ appears on the right hand side of \eqref{ptmeq} with a nonzero coefficient;
\item Ordering of substrates: There is an $m$,  $1\leq m\leq N$ such that $i\in\{1,\ldots,m\}$ if and only if $d_{ij}=d_{ji}=0$ for all $i$. 
\end{enumerate}
Feliu and Wiuf \cite{fewi13} focus on stationary points of the differential equation system, but their central arguments work equally well for quasi-steady state. Gunawarenda \cite{gunaw} also mentions QSS for such systems, and the general results in Saez et al. \cite{sawife} apply as well. \\
We focus our interest on networks with no slow reactions, thus the fast part is the whole system. Then the following hold.
\begin{itemize}
\item By Remark \ref{ltcrem} (iii) and (iv), every $v_k$ is necessarily an LTC variable, as well as $u_{m+1}, \ldots,u_N$; the set $S$ consists of $u_1,\ldots, u_m$.
\item It follows from the definition of a cut (\cite{fewi13}, Def.~1) that every LTC variable set (resp., every minimal LTC variable set) is the union of $\{v_1,\ldots,v_P\}$ and a cut (resp., a minimal cut) of $\{u_1,\ldots, u_n\}$. The results in \cite{fewi13} regarding elimination of species (in particular Proposition 2 about elimination of intermediates and Proposition 6 about elimination of certain substrates) may therefore be employed in the (a priori formal) computation and study of QSS reductions.
\item The results from Subsections \ref{secstandard} and \ref{secnonstandard} now allow to discuss the convergence of the reduction in rather general circumstances. A priori, PTM networks will always lead to a non-standard settings, since $\sum u_i+2\sum v_k$ is a linear first integral for \eqref{ptmeq}. But when there exist sufficiently many linear first integrals to eliminate species (see the detailed discussion in \cite{fewi13}, Subsections 3.3 and 3.4) then one ends up with a standard case. For standard cases the QSS reduction agrees with the singular perturbation reduction (up to irrelevant higher order terms) by Proposition \ref{standardapp}, and convergence is guaranteed whenever the critical manifold is linearly attractive for the fast system.
\end{itemize}
\begin{example}{\em 
Consider a generalization of the irreversible Michaelis-Menten reaction scheme with three intermediate complexes:
\[
E+S{\overset{k_1}{\underset{k_{-1}}\rightleftharpoons}}
C_1{\overset{k_2}{\underset{k_{-2}}\rightleftharpoons}} C_2 {\overset{k_3}{\underset{k_{-3}}\rightleftharpoons}}C_3 {\overset{k_{4}}\rightarrow}\,E+P.
\]
The associated differential equation is
\[
\begin{array}{rcccccccl}
\dot e&=&-k_1es&+&k_{-1}c_1 & &   &+& k_4c_3\\
\dot s&=& -k_1es&+& k_{-1}c_1 &&&&\\
\dot c_1&=& k_1es&-&(k_{-1}+k_2)c_1&+&k_{-2}c_2 && \\
\dot c_2&=&  & & k_2c_1 &-&(k_{-2}+k_3)c_2&+& k_{-3}c_3 \\
\dot c_3 &=& &&&& k_3c_2&-&(k_{-3}+k_4)c_3
\end{array}
\]
and the initial values are $e(0)=e_0$, $s(0)=s_0$ and all $c_i(0)=0$.
\begin{enumerate}[(a)]
\item When all reactions are fast then necessarily $c_1, c_2$ and $c_3$ are LTC variables. We augment these by $e$ to obtain a minimal LTC variable set, and we then have to impose $e_0=\varepsilon e_0^*$ to ensure initial value consistency. With the first integral $e+c_1+c_2+c_3$ one obtains a system for $s$ and the $c_i$; for this the standard reduction is applicable. The eigenvalue condition can be verified via the Hurwitz-Routh conditions (see Gantmacher \cite{Gant}) for the characteristic polynomial of the matrix (using the notation of Proposition \ref{decompred})
\[
D\mu(z)P(z)=\begin{pmatrix}-(k_1s+k_{-1}+k_2) & -k_1s+k_{-2}& -k_1s\\
                                          k_2&-(k_{-2}+k_3) & k_{-3}\\
                                          0 & k_3 & -(k_{-3}+k_4)\end{pmatrix}.
\]
Computation of the reduced equation is straightforward: Since the critical manifold is the affine subspace defined by $c_1=c_2=c_3=0$, Tikhonov-Fenichel and classical QSS reduction are in agreement by \cite{gwz3}, Proposition 5. Solving for $\dot c_i=0$ from bottom up, and substituting in the equation for $s$, one obtains
\[
\dot s=-\frac{k_1k_2k_3k_4e_0}{d}s
\]
with 
\[
\begin{array}{rcl}
d&=& (k_1k_2k_3+k_1k_2k_4+k_1k_2k_{-3}+k_1k_3k_4+k_1k_4k_{-2}+k_1k_{-2}k_{-3})s \\
    &&+k_3k_4k_{-1}+k_4k_{-1}k_{-2}+k_{-1}k_{-2}k_{-3}.
\end{array}
\]
\item On the other hand consider the network with slow last reaction, thus $k_4=\varepsilon k_4^*$. The fast part of the system is given by
\[
\begin{pmatrix}
-k_1es+k_{-1}c_1 \\
 -k_1es+ k_{-1}c_1\\
 k_1es-(k_{-1}+k_2)c_1+k_{-2}c_2 \\
k_2c_1 -(k_{-2}+k_3)c_2+ k_{-3}c_3 \\
k_3c_2-k_{-3}c_3
\end{pmatrix}.
\]
Again, we have the first integral $e+c_1+c_2+c_3=e_0$ and the $c_i$ are necessarily LTC variables. Augmenting by $e$ (with $e_0=\varepsilon e_0^*$) yields a minimal LTC set, but now (by a straightforward computation) the singular perturbation reduction leads to $\dot s=0$. This disagrees with the classical QSS reduction of the system for $s, \,c_1,\,c_2,\,c_3$ with the $c_i$ in quasi-steady state, which actually is incorrect. (Proposition 5 from \cite{gwz3} is not applicable here, since the critical manifold is not an affine subspace.)  In fact, the four-dimensional system with $k_4=\varepsilon k_4^*$ (but $e_0$ not small) does admit a singular perturbation reduction which can be computed via Proposition \ref{decompred}, with a critical manifold that is not affine. This example illustrates the limitations of the scaling approach, with its restricted choices for slow and fast variables. 
\end{enumerate}
}
\end{example}
\begin{example}{\em  We discuss the Michaelis-Menten network with inhibitor, in a variant that includes slow degradation of the inhibitor:
\[
\begin{array}{rcccccl}
E+S&{\overset{k_1}{\underset{k_{-1}}\rightleftharpoons}}&
C_1&{\overset{k_{2}}\rightarrow}&E&+&P,\\
E+Y&{\overset{k_1}{\underset{k_{-1}}\rightleftharpoons}}&
C_2,&  &Y&{\overset{\varepsilon k_{4}^*}\rightarrow}&\emptyset.
\end{array}
\]
The corresponding differential equation is 
\[
\begin{array}{rclcl}
\dot e &=&-k_1es+(k_{-1}+k_2)c_1 & -&k_3ey+k_{-3}c_2\\
\dot s&=& -k_1es+k_{-1}c_1 & &  \\
\dot c_1&=& k_1es-(k_{-1}+k_2)c_1 & &  \\
\dot c_2&=&                                     && k_3ey-k_{-3}c_2\\
\dot y&=&                                     &-&k_3ey+k_{-3}c_2 -\varepsilon k_4^* y\\
\end{array}
\]
with $c_1(0)=c_2(0)=0$, all other initial values positive. Considering the fast part, $c_1$ and $c_2$ are necessarily LTC variables, and $\{e,c_1,c_2\}$ is a minimal LTC set, with initial value consistency forcing $e_0=\varepsilon e_0^*$. The scaled system does not admit standard reduction but the nonstandard approach works. One has
\[
h^{(0)}= \begin{pmatrix} -1&-1\\ 0&0\\ 1&0\\ 0&1\\ 0&0\end{pmatrix}\cdot \begin{pmatrix} k_1e^*s-(k_{-1}+k_2) c_1^*\\ k_3e^*y-k_{-3}c_2^*\end{pmatrix}=:P\cdot \mu;
\]
the critical manifold $Z$ is given by $\mu=0$, equivalently
\[
c_1^*=\frac{k_1e^*s}{k_{-1}+k_2},\quad c_2^*=\frac{k_3e^*y}{k_{-3}};
\]
moreover 
\[
h^{(1)}=\begin{pmatrix}0\\ -k_1e^*s+k_{-1}c_1^*\\ 0\\0\\ -k_3e^*y+k_{-3}c_2^*-k_4^*y\end{pmatrix}=\begin{pmatrix}0\\ -k_2c_1^*\\ 0\\0\\ -k_4^*y\end{pmatrix} \quad\text{on  }Z.
\]
One readily verifies the eigenvalue condition for the $2\times 2$ matrix $D\mu\cdot P$. The second and the fifth rows of $P$ are zero, therefore the same holds for the second and the fifth rows of $P(D\mu P)^{-1}D\mu$, whence the second and fifth entries of $Q\cdot h^{(1)}$ are equal to those of $h^{(1)}$. From the first integral $e^*+c_1^*+c_2^*$ (see Proposition \ref{firstint}) and the defining equations for $Z$ one sees that only the second and fifth entries of the reduced system are needed. This yields the reduced system
\[
\begin{array}{rcl}
s^\prime &=&-k_2c_1^*\\
y^\prime&=& -k_4^*y
\end{array}
\]
on $Z$, with
\[
e^*\left(1+\frac{k_1}{k_{-1}+k_2}s+\frac{k_3}{k_{-3}}y\right)=e_0^*; \quad c_1^*=\frac{k_1}{k_{-1}+k_2}se^*.
\]
Setting
\[
M_1:=\frac{k_{-1}+k_2}{k_1}, \quad M_2:=\frac{k_3}{k_{-3}}
\]
one finally obtains
\[
\begin{array}{rcl}
s^\prime &=&-\frac{k_2e_0^*s}{M_1+s+M_1M_2y}\\
y^\prime&=& -k_4^*y
\end{array}.
\]
The familiar reduction (see e.g. Keener and Sneyd \cite{KeSn}, or \cite{GSWZ1}) when inhbitor does not degrade consists of just the first equation, with $y=y_0$. Hence the result of our reduction is intuitively obvious, but still a rigorous derivation and a convergence proof are preferable.
}
\end{example}
\subsection{Reaction-transport systems}
Ordinary differential equations which model reactions and transport appear in various circumstances, for instance as spatial discretizations of reaction-diffusion systems, or in multicellular reaction networks. We derive some general results for scalings of such systems, and look at some applications. (We note in  passing that the results may be of some use for partial differential equations modelling reaction-transport systems.) \\  We start with a locally Tikhonov consistent ``reaction system''
\[
\begin{array}{rcl}
\dot x&=& F(x,y)y+\varepsilon f(x,y,\varepsilon)\\
\dot y&=& G(x,y)y+\varepsilon g(x,y,\varepsilon),
\end{array}
\]
reactions taking place in each of $N$ compartments (numbered by $\alpha$). We denote the concentrations in compartment $\#\alpha$ by
$x_\alpha, \,y_\alpha$, and furthermore set
\[
x_\alpha=\begin{pmatrix}x_{1,\alpha}\\ \vdots \\ x_{r,\alpha}\end{pmatrix},\quad y_\alpha=\begin{pmatrix}y_{1,\alpha}\\ \vdots \\ y_{s,\alpha}\end{pmatrix},\quad \widehat x=\left( x_\alpha\right)_{1\leq \alpha\leq N}\text{  and  } \widehat y=\left( y_\alpha\right)_{1\leq \alpha\leq N}.
\]
A reaction-transport system with general transport terms is then given by
\begin{equation}\label{rtfull}
\begin{array}{rcl}
\dot x_\alpha&=& F(x_\alpha,y_\alpha)y_\alpha+\varepsilon f(x_\alpha, y_\alpha,\varepsilon)+\Delta^{(x)}(\varepsilon)\sum_{\beta=1}^N  \Theta_{\alpha,\beta}^{(x)}(\widehat x, \widehat y,\varepsilon)x_\beta\\
\dot y_\alpha&=& G(x_\alpha,y_\alpha)y_\alpha+\varepsilon g(x_\alpha,y_\alpha,\varepsilon)+\Delta^{(y)}(\varepsilon)\sum_{\beta=1}^N  \Theta_{\alpha,\beta}^{(y)}(\widehat x, \widehat y,\varepsilon)y_\beta.\\
\end{array}
\end{equation}
Here the $ \Theta_{\alpha,\beta}^{(x)}$ and $ \Theta_{\alpha,\beta}^{(y)}$ are smooth matrix-valued functions of appropriate sizes, and
\[
\Delta^{(x)}(\varepsilon)={\rm diag}\,(\varepsilon^{\rho_1},\ldots,\varepsilon^{\rho_r}),\quad \Delta^{(y)}(\varepsilon)={\rm diag}\,(\varepsilon^{\sigma_1},\ldots,\varepsilon^{\sigma_s})
\]
with all $\rho_i,\,\sigma_j\in\{0,\,1\}$. Thus $\rho_i=0$ means fast transport for all $x_{i,\alpha}$ while $\rho_i=1$ means slow transport, and analogously for the $y_{j,\alpha}$. (Thus the scaling $\widehat y=\varepsilon \widehat y^*$ is not necesarily locally consistent.) The general form of the transport terms only implies that no transport occurs when no species are present in any compartment; in concrete applications one will impose stronger requirements. \\
The existence of transport terms strongly influences possible LTC variables: Whenever some $\Delta^{(x)}(0)\cdot\Theta_{\alpha,\beta}^{(x)}(0,0,0)\not=0$ then some entry of $x_\beta$ is necessarily an LTC variable, and whenever some $\Delta^{(y)}(0)\cdot\Theta_{\alpha,\beta}^{(y)}(0,0,0)\not=0$ then some entry of $y_\beta$ is necessarily an LTC variable.\\
 The following special result is a straightforward consequence of Proposition \ref{nonstprop}.
\begin{proposition}
When $\rho_1=\cdots=\rho_r=1$ (thus transport of every $x_\alpha$ is slow) then system \eqref{rtfull} is locally Tikhonov consistent with respect to the scaling $\widehat y=\varepsilon {\widehat y}^*$. The scaled fast subsystem is 
\begin{equation}\label{rtsfast}
\begin{array}{rcccl}
\dot x_\alpha&=& 0& &\\
\dot y^*_\alpha&=& G(x_\alpha,0)y^*_\alpha+ g(x_\alpha,0,0)&+&\Delta^{(y)}(0)\sum_{\beta=1}^N  \vartheta_{\alpha,\beta}^{(y)}(\widehat x)y^*_\beta\\
\end{array}
\end{equation}
with
\[
 \vartheta_{\alpha,\beta}^{(y)}(\widehat x):=\Theta_{\alpha,\beta}^{(y)}(\widehat x,0,0).
\]
\end{proposition}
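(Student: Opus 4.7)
The proposition is essentially a bookkeeping of powers of $\varepsilon$ under the scaling, so the plan is a direct verification in three steps. I would keep \eqref{rtfull} in the partitioned form $(\widehat x,\widehat y)$ and identify $f_0,g_0$ as the $\varepsilon^0$--terms on the right.

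First I would verify local Tikhonov consistency per Definition \ref{consdef}. Setting $\varepsilon=0$ in \eqref{rtfull}, the transport term in the $x$--equation disappears because $\rho_1=\cdots=\rho_r=1$ forces $\Delta^{(x)}(0)=0$, so $f_0(\widehat x,\widehat y)$ reduces to the stack of $F(x_\alpha,y_\alpha)y_\alpha$. In the $y$--equation, $g_0(\widehat x,\widehat y)$ is the stack of $G(x_\alpha,y_\alpha)y_\alpha+\Delta^{(y)}(0)\sum_\beta\Theta^{(y)}_{\alpha,\beta}(\widehat x,\widehat y,0)y_\beta$. Both expressions contain an explicit factor $y$ in every summand, hence vanish identically on $\widehat y=0$; this yields $f_0(\widehat x,0)=0$ and $g_0(\widehat x,0)=0$.

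Next I would substitute $\widehat y=\varepsilon\widehat y^*$ into \eqref{rtfull} and Taylor--expand $F,G,f,g,\Theta^{(x)},\Theta^{(y)}$ around $y=0$ and $\varepsilon=0$. In the $\dot x_\alpha$ equation, $F(x_\alpha,\varepsilon y^*_\alpha)\,\varepsilon y^*_\alpha$ contributes order $\varepsilon$, $\varepsilon f$ contributes order $\varepsilon$, and the transport term carries an overall $\Delta^{(x)}(\varepsilon)=\varepsilon I_r$; so every term on the right is at least $O(\varepsilon)$, confirming that $\widehat x$ is the slow block and giving $\dot x_\alpha=0$ in the fast subsystem.

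For the $\dot y_\alpha$ equation, using $\dot y_\alpha=\varepsilon\dot y^*_\alpha$, I would collect terms of order $\varepsilon$ on the right. Expansion gives
\[
\varepsilon\dot y^*_\alpha=\varepsilon G(x_\alpha,0)y^*_\alpha+\varepsilon g(x_\alpha,0,0)+\Delta^{(y)}(\varepsilon)\sum_{\beta=1}^N\Theta^{(y)}_{\alpha,\beta}(\widehat x,0,0)\,\varepsilon y^*_\beta+O(\varepsilon^2).
\]
The $j$--th component of the transport contribution carries the factor $\varepsilon^{\sigma_j+1}$. Dividing by $\varepsilon$ and letting $\varepsilon\to 0$, the prefactor $\varepsilon^{\sigma_j}$ limits to $1$ when $\sigma_j=0$ and to $0$ when $\sigma_j=1$; these are precisely the diagonal entries of $\Delta^{(y)}(0)$. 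Reassembling componentwise yields \eqref{rtsfast} with $\vartheta^{(y)}_{\alpha,\beta}(\widehat x)=\Theta^{(y)}_{\alpha,\beta}(\widehat x,0,0)$, as claimed. There is no real obstacle; the only point requiring care is the componentwise tracking of the mixed orders in $\Delta^{(y)}(\varepsilon)$, which is why the prefactor $\Delta^{(y)}(0)$ in \eqref{rtsfast} cannot be absorbed into the coefficients $\vartheta^{(y)}_{\alpha,\beta}$.
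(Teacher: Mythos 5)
Your proof is correct and follows essentially the same route as the paper, which treats this proposition as a straightforward consequence of Proposition \ref{nonstprop}, i.e., substitution of $\widehat y=\varepsilon\widehat y^*$ followed by Taylor expansion and bookkeeping of the orders of $\varepsilon$ (including the componentwise handling of $\Delta^{(y)}(\varepsilon)$, whose entries $\varepsilon^{\sigma_j}$ tend to the entries of $\Delta^{(y)}(0)$). Your explicit verification that $\rho_1=\cdots=\rho_r=1$ is what kills the $x$-transport term at order $\varepsilon^0$, and hence what secures local Tikhonov consistency, is exactly the point of the hypothesis.
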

It will depend on further properties whether this scaled system admits a Tikhonov-Fenichel reduction. 

\begin{example}\label{examplemmdiff}
{\em  The reduction of a
spatially discretized Michaelis-Menten system with slow diffusion was investigated in \cite{flww}. This is a system of dimension $3N$, with $N$ compartments, given by 
\begin{equation}\label{mmdiff}
\begin{array}{rclcl}
\dot s_\alpha&=& \varepsilon \theta_s^* D_\alpha \widehat s &-& k_1s_\alpha w_\alpha+(k_1s_\alpha+k_{-1})c_\alpha,\\
\dot c_\alpha&=& \varepsilon \theta_c^* D_\alpha\widehat  c &+& k_1s_\alpha w_\alpha-(k_1s_\alpha+k_{-1}+k_2)c_\alpha,\\
\dot w_\alpha&=& \varepsilon \theta_e^* D_\alpha\widehat  w &+& \varepsilon(\theta_c^*-\theta_e^*)D_\alpha \widehat c
\end{array}
\end{equation}
with $1\leq \alpha\leq N$, { $w_\alpha:=e_\alpha+c_\alpha$, and $\widehat x:=(x_\alpha)$ for $x\in\{s,\,c,\,w\}$. The matrix $D$ represents discretized diffusion, and $D_\alpha$ denotes the $\alpha^{\rm th}$ row of $D$. 
System \eqref{mmdiff} is in Tikhonov standard form and one readily determines the reduced equation (in slow time $\tau=\varepsilon t$) as
\[
w_\alpha^\prime=\theta_e^*D_\alpha \widehat w,\quad 1\leq \alpha\leq N
\]
on the $N$-dimensional critical manifold defined by $\widehat c=\widehat s=0$. This simply describes diffusion with all reactions in equilibrium.\\
In order to discuss reduction of the system under the familiar assumption of small initial overall enzyme concentration $w_\alpha(0)=\varepsilon e_{\alpha,0}^*$, one must introduce scalings. Following Proposition \ref{nonstprop} we let $\widehat c=\varepsilon \widehat c^*$ and $\widehat w=\varepsilon \widehat w^*$. In scaled variables we then obtain 
\begin{equation}\label{mmdiffscal}
\begin{array}{rcl}
\dot s_\alpha&=& \varepsilon \left(\theta_s^* D_\alpha\widehat s- k_1s_\alpha w_\alpha^*+(k_1s_\alpha+k_{-1})c_\alpha^*\right)\\
\dot c_\alpha^*&=& \varepsilon  \theta_c^* D_\alpha\widehat c ^*+ k_1s_\alpha w_\alpha^*-(k_1s_\alpha+k_{-1}+k_2)c_\alpha^*\\
\dot w_\alpha^*&=& \varepsilon\left( \theta_e^* D_\alpha \widehat w^* + (\theta_c^*-\theta_e^*)D_\alpha\widehat c^*\right)
\end{array}
\end{equation}
with $1\leq \alpha\leq N$,
and the critical manifold is given by 
\[
 k_1s_\alpha w_\alpha^*-(k_1s_\alpha+k_{-1}+k_2)c_\alpha^*=0, \quad 1\leq \alpha\leq N.
\]
 This system is in standard form, and one obtains the following reduced equation in slow time:
\begin{equation}\label{mmdiffred}
\begin{array}{rcl}
s_\alpha^\prime&=& \theta_s^*D_\alpha\widehat s -\frac{k_1k_2w^*_\alpha s_\alpha}{k_1s_\alpha+k_{-1}+k_2}\\
{w_\alpha^*}^\prime&=&\theta_e^*D_\alpha\widehat w^* +(\theta_c^*-\theta_e^*)D_\alpha\left((\frac{k_1k_2w^*_\beta s_\beta}{k_1s_\beta+k_{-1}+k_2})_{1\leq \beta\leq N}\right).
\end{array}
\end{equation} 
See \cite{flww} (where a slightly different scaling was employed) for details, and for extending the reduction to the reaction-diffusion PDE.} \\
}

\end{example}
\begin{example}\label{extrans}
{\em 
 Multicellular reaction networks describe (identical) reactions in $N$ cells that are connected by transport (see Shapiro and Horn \cite{shapiro} for linear transport terms, Korc and Feinberg \cite{kf} for polynomial mass action reaction terms). We consider the simple reaction
  \[
   S+P\xrightleftharpoons[k_{-1}]{k_{1}} C.
  \]
 Denote by $(s_{\alpha},p_{\alpha},c_{\alpha})$ the concentrations of $S$, $P$ and $C$ in cell $\alpha$ and set
  \[
   (\hat s,\hat p,\hat c):=(s_{\alpha},p_{\alpha},c_{\alpha})_{1\leq\alpha\leq N}.
  \]
\begin{enumerate}[a)]
 \item Assuming mass action kinetics and slow transport, the multicellular reaction network can be described by the system
  \begin{align*}
   &\dot s_{\alpha}=-k_1s_{\alpha}p_{\alpha}+k_{-1}c_{\alpha}+\varepsilon\sum_\beta\Theta^{(s)}_{\alpha, \beta}(\widehat s,\widehat p,\widehat c,\varepsilon)s_\beta \\
   &\dot p_{\alpha}=-k_1s_{\alpha}p_{\alpha}+k_{-1}c_{\alpha}+\varepsilon\sum_\beta\Theta^{(p)}_{\alpha, \beta}(\widehat s,\widehat p,\widehat c,\varepsilon)p_\beta\\
   &\dot c_{\alpha}=\ \ k_1s_{\alpha}p_{\alpha}-k_{-1}c_{\alpha}+\varepsilon\sum_\beta\Theta^{(c)}_{\alpha, \beta}(\widehat s,\widehat p,\widehat c,\varepsilon)c_\beta.
  \end{align*}
Here each $c_\alpha$ is necessarily a LTC variable; we augment these by the $s_\alpha$, imposing initial value consistency for $\widehat c$ and $\widehat s$ as additional conditions. 
We scale $s_{\alpha}=\varepsilon s^*_{\alpha}$ and $c_{\alpha}=\varepsilon c^*_{\alpha}$ for $1\leq \alpha\leq N$. Therefore
  \begin{align*}
   &\dot s_{\alpha}^*=-k_1s_{\alpha}^*p_{\alpha}+k_{-1}c_{\alpha}^*+\varepsilon\sum_\beta\Theta^{(s)}_{\alpha, \beta}(0,\widehat p,0,0)s^*_\beta +\varepsilon^2\cdots\\
   &\dot p_{\alpha}=-\varepsilon k_1s_{\alpha}^*p_{\alpha}+\varepsilon k_{-1}c_{\alpha}^*+\varepsilon\sum_\beta\Theta^{(p)}_{\alpha, \beta}(0,\widehat p,0,0)p_\beta +\varepsilon^2\cdots\\
   &\dot c_{\alpha}^*=k_1s_{\alpha}^*p_{\alpha}-k_{-1}c_{\alpha}^*+\varepsilon\sum_\beta\Theta^{(c)}_{\alpha, \beta}(0,\widehat p,0,0)c^*_\beta +\varepsilon^2\cdots
  \end{align*}
for $1\leq \alpha<N$. This is a nonstandard case, with Proposition \ref{decompred} applicable to the scaled system. For the reduced system on the critical manifold defined by $k_{-1}c_{\alpha}^*=k_1s_{\alpha}^*p_{\alpha}$ for $1\leq \alpha\leq N$ and with
\[
{Q_\alpha=\frac1{k_1p_\alpha+k_{-1}}\begin{pmatrix}k_{-1} & -k_1s^*_\alpha & k_{-1}\\
                                                                                                         0 & k_1p_\alpha+ k_{-1} & 0 \\
                                                                                                        k_1p_\alpha & k_1s^*_\alpha & k_1 p_\alpha\end{pmatrix}}
\]
 one obtains (in slow time)
\[
\begin{pmatrix}{s^*_\alpha}^\prime \\
                          p_\alpha^\prime \\
                        {c^*_\alpha}^\prime \end{pmatrix}= Q_{{\alpha}}\cdot\begin{pmatrix}\sum_\beta\Theta^{(s)}_{\alpha, \beta}(0,\widehat p,0,0)s^*_\beta\\
                                                                                                                      \sum_\beta\Theta^{(p)}_{\alpha, \beta}(0,\widehat p,0,0)p_\beta\\
                                                                                                                    \sum_\beta\Theta^{(c)}_{\alpha, \beta}(0,\widehat p,0,0)c^*_\beta
                                                                                                   \end{pmatrix}.
\]
Here one may replace $c_{\alpha}^*=k_1s_{\alpha}^*p_{\alpha}/k_{-1}$ and discard the equations for $c^*_\alpha$; see also \cite{gl}, subsection 5.2.
\item If one considers fast transport for $S$ (which forces the $s_\alpha$ to be LTC variables) but still requires slow transport for the other species, the scaled system becomes
 \begin{align*}
   &\dot s_{\alpha}^*=-k_1s_{\alpha}^*p_{\alpha}+k_{-1}c_{\alpha}^*+\sum_\beta\Theta^{(s)}_{\alpha, \beta}(0,\widehat p,0,0)s^*_\beta +\varepsilon\cdots\\
   &\dot p_{\alpha}=-\varepsilon k_1s_{\alpha}^*p_{\alpha}+\varepsilon k_{-1}c_{\alpha}^*+\varepsilon\sum_\beta\Theta^{(p)}_{\alpha, \beta}(0,\widehat p,0,0)p_\beta +\varepsilon^2\cdots\\
   &\dot c_{\alpha}^*=k_1s_{\alpha}^*p_{\alpha}-k_{-1}c_{\alpha}^*+\varepsilon\sum_\beta\Theta^{(c)}_{\alpha, \beta}(0,\widehat p,0,0)c^*_\beta +\varepsilon^2\cdots.
  \end{align*}
For further discussions we focus on a simple transport mechanism, viz. 
\[
\sum_\beta \Theta^{(s)}_{\alpha, \beta}(\widehat s,\widehat p,\widehat c,\varepsilon)s_\beta =\delta_s\left(s_{\alpha-1}-2s_\alpha+s_{\alpha+1}\right)
\]
with the understanding that $s_0=s_1$ and $s_{N+1}=s_N$; similarly for $\widehat p$ and $\widehat c$. (These terms appear when discretizing diffusion in dimension one, with Neumann boundary conditions). The computations for the reduction are slightly involved; details are given in Appendix \ref{appextrans} below.
With
  \[
   \sum_\beta\Theta^{(s)}_{\alpha, \beta}(0,\widehat p,0,0)s^*_\beta +\varepsilon\cdots=\delta_s\left(s^*_{\alpha-1}-2s^*_\alpha+s^*_{\alpha+1}\right),
  \]
the reduced equation turns out as
  \begin{align*}
   &{s_{\alpha}^*}'=0\\
   &{p_{\alpha}}'=\delta_p\left(p_{\alpha-1}-2p_\alpha+p_{\alpha+1}\right)\\
   &{c_{\alpha}^*}'=\frac{k_1s_{\alpha}}{k_{-1}}\delta_p\left(p_{\alpha-1}-2p_\alpha+p_{\alpha+1}\right)
  \end{align*}
on the critical manifold defined by $s^*_1=\ldots=s^*_N$ and $k_1s^*_{\alpha}p_{\alpha}=k_{-1}c^*_{\alpha}$. This can be simplified further to
  \[
   {p_{\alpha}}'=\delta_p\left(p_{\alpha-1}-2p_\alpha+p_{\alpha+1}\right)
  \]
(pure diffusion of $\widehat p$) augmented by 
  \[
   s^*_{\alpha}=\widetilde s^*_{0}:=\frac{k_{-1}\sum_{\alpha=1}^N(s^*_{\alpha,0}+c^*_{\alpha,0})}{k_{-1}N+k_1\sum_{\alpha=1}^Np_{\alpha,0}} \quad \text{and} \quad c^*_{\alpha}=\frac{k_1\widetilde s^*_{0}}{k_{-1}}p_{\alpha}
  \]
 where $\left(s^*_{\alpha,0},p_{\alpha,0},c^*_{\alpha,0}\right)_{1\leq \alpha\leq N}$ are the initial values of the scaled system. Thus, fast transport of substrate alone is sufficient to reduce the system to a diffusion problem.
\end{enumerate}
}
\end{example}

\appendix

\section{Appendix: Tikhonov-Fenichel reductions}
For the reader's convenience we restate here some variants of \cite{gw2}, Thm.~1 and Remark 2 on singular perturbation reductions with no a priori separation of slow and fast variables. In addition we determine the first-order approximation of the slow manifold and prove a conservation property for first integrals in the reduction procedure. {The necessary theoretical background from singular perturbation theory does not go beyond the classical work of Tikhonov \cite{tikh} and Fenichel \cite{feninv,fenichel}, but the approach from \cite{gw2} is quite convenient for explicit computations.}\\

Given an open subset $S\subset \R^n$ and a smooth function $h$ defined on a neighborhood of $ S\times [0,\varepsilon_0]$ we consider the system 
 \begin{equation}\label{evolution}
   \dot z= h(z,\varepsilon)= h^{(0)}(z)+\varepsilon h^{(1)}(z)+\varepsilon^2 \cdots
  \end{equation}
in the asymptotic limit $\varepsilon\to 0$, as well as its time-scaled version 
\begin{equation}\label{evolutionscaled}
 z^\prime=  \frac{d z}{d\tau}=\varepsilon^{-1}h(z,\varepsilon)=\varepsilon^{-1}h^{(0)}(z)+  h^{(1)}(z)+\ldots, \quad \tau=\varepsilon t.
  \end{equation}
We impose the following requirements, which are necessary and sufficient for the local existence of a transformation into Tikhonov standard form (see \cite{nw11}):
\begin{enumerate}[(i)]
\item  There exists a point $z_0$ in the zero set $\mathcal V(h^{(0)})$ such that ${\rm rank} \,D h^{(0)}(z)=r<n$ for all $z$ in some neighborhood of $z_0$ in $\mathbb R^n$. By the implicit function theorem, there exists a neighborhood $U$ of $z_0$ such that $Z:=U\cap \mathcal V(h^{(0)})$ is a $(n-r)$-dimensional submanifold.
\item  There is  a direct sum decomposition
  \[
   \mathbb R^n={\rm ker}\ D h^{(0)}(z) \oplus {\rm im}\ D h^{(0)}(z)
  \]
for all $z\in Z$.
\item There is $\nu>0$ such that all nonzero eigenvalues of $D h^{(0)}(z)$, $z\in U$,  have real part $\leq -\nu$.
\end{enumerate}
A coordinate-free local version of Tikhonov's and Fenichel's reduction theorem can be stated as follows.
\begin{proposition}\label{decompred} Let conditions (i)--(iii) be given. 
\begin{enumerate}[(a)]
\item{\em  Decomposition.} On some neighborhood $\widetilde U\subseteq U$ of $z_0$ there exist smooth maps  \[P\colon \widetilde U\to \mathbb R^{n\times r}\quad \text{and} \quad \mu\colon\widetilde U\to \mathbb R^r\]  with ${\rm rank}\, P(z_0)={\rm rank} \,D\mu(z_0)=r$, such that
    \[
      h^{(0)}(z)=P(z)\mu(z),\quad z\in \widetilde U.
    \]
 Moreover, the zero set $Y$ of $\mu$ satisfies $Y=Z\cap \widetilde U= \mathcal V( h^{(0)})\cap\widetilde U$. The entries of $\mu$ may be taken as any $r$ entries of $h^{(0)}$ that are functionally independent at $z_0$.
\item {\em Reduction.} The system
    \begin{equation}\label{Grenzsystem}
     z^\prime=q(z):=Q(z)\cdot h^{(1)}(z)
    \end{equation}
  with the projection matrix \[Q(z):=Id-P(z)(D\mu(z)P(z))^{-1}D\mu(z)\] 
is defined in $\widetilde U$. Moreover, every entry of $\mu$ is a first integral of \eqref{Grenzsystem}; in particular it admits the manifold $Y$ as an invariant set.
\item {\em Convergence.} There exists $T_2>0$ and a neighborhood $U^*\subset U$ of $Y$ such that all solutions of \eqref{evolutionscaled} starting in $U^*$ converge to solutions of the reduced system \eqref{Grenzsystem} on $Y$ as $\varepsilon\to 0$, uniformly on $[T_1,T_2]$ for any $T_1$ with $0<T_1<T_2$.
\end{enumerate}
\end{proposition}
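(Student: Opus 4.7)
The plan is to dispatch the three parts in order, using increasingly classical tools: (a) via a smooth Hadamard lemma, (b) via linear algebra on the direct sum furnished by (ii), and (c) via the classical Tikhonov theorem after transforming to standard form.

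For (a), I would begin by choosing $r$ entries $\mu_1,\dots,\mu_r$ of $h^{(0)}$ whose differentials are linearly independent at $z_0$; this is possible since $\mathrm{rank}\,Dh^{(0)}(z_0)=r$. By the implicit function theorem, $Y:=\mu^{-1}(0)$ is, on a small enough neighborhood $\widetilde U$, an $(n-r)$-dimensional submanifold contained in $\mathcal{V}(h^{(0)})$. The reverse inclusion is forced by the rank hypothesis: the remaining $n-r$ entries of $h^{(0)}$ must vanish on $Y$, as otherwise $Dh^{(0)}$ would attain rank exceeding $r$ near a point of $Y$. In a local chart that straightens $\mu$ to a coordinate projection, the integral form of Hadamard's lemma produces smooth coefficients $c^i_j$ with $h^{(0)}_i=\sum_j c^i_j\mu_j$, so $h^{(0)}=P\mu$ on $\widetilde U$. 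Differentiating on $Y$ (where $\mu=0$) yields $Dh^{(0)}=P\cdot D\mu$; as this product has rank $r$, both $P(z_0)$ and $D\mu(z_0)$ have rank $r$. Any other choice of $r$ functionally independent entries of $h^{(0)}$ would produce a factorization differing by a smooth invertible $r\times r$ change of generators, which does not affect the constructions that follow.

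For (b), the identity $Dh^{(0)}=P\cdot D\mu$ on $Y$ shows $\mathrm{im}\,Dh^{(0)}=\mathrm{im}\,P$ and, by dimensions, $\ker Dh^{(0)}=\ker D\mu$. Condition (ii) therefore reads $\mathbb R^n=\ker D\mu\oplus\mathrm{im}\,P$ at every $z\in Y$, which forces the map $D\mu\cdot P\colon\mathbb R^r\to\mathbb R^r$ to be a bijection. Consequently $Q:=I_n-P(D\mu\cdot P)^{-1}D\mu$ is a well-defined smooth projection satisfying $QP=0$ and $D\mu\cdot Q=0$. The latter identity, applied to $h^{(1)}$, gives $D\mu_i\cdot q\equiv 0$ for each $i$, so every $\mu_i$ is a first integral of \eqref{Grenzsystem} and $Y$ is invariant. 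For (c), I would invoke the result of \cite{nw11}: precisely under (i)--(iii) there exist local coordinates $(u,v)$ with $v=\mu(z)$ in which the system takes Tikhonov standard form, with fast part $\dot v=(D\mu\cdot P)(z)\,v+\text{higher order in }v+\varepsilon(\cdots)$ and slow part $\dot u=\varepsilon(\cdots)$. Since the nonzero eigenvalues of $Dh^{(0)}|_Y=P\cdot D\mu$ coincide with those of $D\mu\cdot P|_Y$, condition (iii) supplies the uniform exponential attractivity required by Tikhonov's theorem, yielding the uniform convergence on $[T_1,T_2]$.

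The main obstacle is not the Hadamard step in (a) nor the linear algebra in (b), but rather the final identification in (c): extracting the intrinsic projection $Q$ from the standard-form reduction. The cleanest route is to characterize $Q$ uniquely by $QP=0$ and $D\mu\cdot Q=0$, which singles it out as the projection onto $\ker D\mu$ along $\mathrm{im}\,P$; any Tikhonov reduction performed in a chart of the type above must produce this same intrinsic projection, since it encodes only the splitting $\mathbb R^n=\ker D\mu\oplus\mathrm{im}\,P$ and not the chart. The reduced velocity at a point of $Y$ is then necessarily the $\ker D\mu$-component of $h^{(1)}$, i.e.\ $Q\cdot h^{(1)}$, and this closes the argument.
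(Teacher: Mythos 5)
The paper does not actually prove this proposition: it is restated from \cite{gw2}, Thm.~1 and Remark~2 (with the standard-form background from \cite{nw11}), so there is no in-paper argument to compare against line by line. Your self-contained proof follows essentially the route of those sources — Hadamard's lemma for the factorization $h^{(0)}=P\mu$, the direct-sum condition (ii) to get invertibility of $D\mu\,P$, and a passage to Tikhonov standard form in coordinates $(u,v)$ with $v=\mu$ and $u$ built from first integrals of the fast system — and parts (a) and (b) are correct as written (the argument that the remaining entries of $h^{(0)}$ vanish on $Y$ is compressed but repairable: each row $Dh^{(0)}_j$ is pointwise a combination of the $D\mu_i$ by the constant-rank hypothesis, so $h^{(0)}_j$ is locally constant, hence zero, on $Y$).

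The one place where you are too glib is the final identification in (c). You assert that ``any Tikhonov reduction performed in a chart of the type above must produce this same intrinsic projection, since it encodes only the splitting $\mathbb R^n=\ker D\mu\oplus\mathrm{im}\,P$.'' But the chart-based reduction projects along the level sets of the slow coordinates $u=\psi(z)$, i.e.\ along $\bigcap_i\ker D\psi_i$, and it is not a priori the same as projecting along $\mathrm{im}\,P$. What makes the two agree is the specific choice of $\psi$ as first integrals of the fast system: differentiating the identity $D\psi(z)\,P(z)\,\mu(z)\equiv 0$ and evaluating on $Y$ (where $\mu=0$) gives $D\psi\,P\,D\mu=0$ there, and since $D\mu$ is surjective this forces $D\psi\,P=0$ on $Y$. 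Only then does $D\psi\,Q\,h^{(1)}=D\psi\,h^{(1)}$ hold on $Y$, so that the unique vector $w\in\ker D\mu$ with $D\psi\,w=D\psi\,h^{(1)}$ is $w=Q\,h^{(1)}$. This is a short computation, but it is the actual content of the step you label ``the main obstacle,'' and it should be spelled out rather than attributed to intrinsicness alone. A second, minor point: $\widetilde U$ must be shrunk so that $D\mu\,P$ remains invertible off $Y$, since (ii) only gives invertibility on $Z$.
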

We call \eqref{Grenzsystem} the Tikhonov-Fenichel reduction of \eqref{evolutionscaled}.  
As follows from \cite{gw2}, for rational $h^{(0)}$ one may choose $P$ and $\mu$ rational, and the decomposition can be obtained via algorithmic algebra.\\
The submanifold $Z$ is called the {\em asymptotic slow manifold} (or {\em critical manifold}) of the system. 
For small $\varepsilon$ there exists an invariant slow manifold of system \eqref{evolution} for which $Z$ is an order $\varepsilon^0$ approximation. For the sake of completeness we also include an order $\varepsilon^1$ approximation:
\begin{proposition}\label{smprop}
Up to first order in $\varepsilon$, the slow manifold of system \eqref{evolution} is determined by any equation
\[
 \mu(z)+\varepsilon\left((D\mu(z)P(z))^{-1}\cdot D\mu(z) \,h^{(1)}(z)+ A(z)\cdot\mu(z)\right)=0
\]
with an arbitrary smooth matrix-valued function $A$, up to higher order terms.
\end{proposition}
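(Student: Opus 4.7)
The plan is to seek the slow manifold, to first order in $\varepsilon$, as the zero set of
\[
\Phi(z,\varepsilon):=\mu(z)+\varepsilon\psi(z)+O(\varepsilon^2)
\]
with $\psi$ an unknown smooth function on $\widetilde U$, and then to determine $\psi$ by imposing invariance under the flow of \eqref{evolution}. That the slow manifold actually admits such an expansion follows from Fenichel's invariant manifold theorem together with the rank condition on $D\mu$: since $\mu$ is a submersion, any smooth perturbation of $Z=\{\mu=0\}$ is locally cut out by a perturbation of $\mu$, and the defining function is unique only up to left multiplication by an invertible $r\times r$ matrix --- this will turn out to be the source of the freedom represented by $A(z)$.

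The next step is to differentiate $\Phi(z(t),\varepsilon)\equiv 0$ along a solution of \eqref{evolution}, which gives
\[
(D\mu(z)+\varepsilon D\psi(z))(h^{(0)}(z)+\varepsilon h^{(1)}(z))+O(\varepsilon^2)=0.
\]
Substituting the decomposition $h^{(0)}=P\mu$ from Proposition~\ref{decompred}(a) together with the relation $\mu(z)=-\varepsilon\psi(z)+O(\varepsilon^2)$ valid on the slow manifold, the zeroth-order contribution $D\mu(z)P(z)\mu(z)$ picks up an additional factor of $\varepsilon$; analogously, $\varepsilon D\psi(z)\,h^{(0)}(z)=\varepsilon D\psi(z)P(z)\mu(z)$ is already of order $\varepsilon^2$. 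Collecting the coefficient of $\varepsilon^1$ then yields
\[
-\,D\mu(z)P(z)\psi(z)+D\mu(z)h^{(1)}(z)=0.
\]
Since $D\mu(z)P(z)$ is invertible on $\widetilde U$ --- as is already implicit in the very definition of the projection matrix $Q$ in Proposition~\ref{decompred}(b), and which one may alternatively derive from the fact that the nonzero eigenvalues of $D h^{(0)}(z)=P(z)D\mu(z)$ at $z\in Z$ coincide with those of $D\mu(z)P(z)$ --- this algebraic equation has the unique solution
\[
\psi(z)=(D\mu(z)P(z))^{-1}D\mu(z)h^{(1)}(z).
\]

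The final step is to account for the freedom $A(z)\mu(z)$. Replacing $\psi$ by $\psi+A\mu$ converts the defining equation into $(I+\varepsilon A)\mu+\varepsilon\psi=0$, and multiplying by the invertible matrix $(I+\varepsilon A)^{-1}=I+O(\varepsilon)$ returns an equation of the form $\mu+\varepsilon\psi+O(\varepsilon^2)=0$, so both equations define the same manifold up to higher-order terms, as claimed. I expect the only real obstacle in this plan to be a careful justification that the true slow manifold does admit a smooth expansion of the form $\mu+\varepsilon\psi+\cdots=0$, rather than merely being a $C^0$ perturbation of $Z$; but this is a standard consequence of the geometric singular perturbation theory of Fenichel \cite{fenichel}, so the formal calculation above does indeed produce the claimed first-order approximation.
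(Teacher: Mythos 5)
Your proposal is correct and follows essentially the same route as the paper: an ansatz $\mu+\varepsilon\Psi=0$, the decomposition $h^{(0)}=P\mu$, extraction of the order-$\varepsilon$ terms to get $\Psi=(D\mu P)^{-1}D\mu\,h^{(1)}$, and the observation that the residual freedom $A\mu$ is absorbed by the invertible factor $I+\varepsilon A$. The only cosmetic difference is that the paper encodes invariance via the algebraic condition $D\Phi\cdot h=(\Lambda_0+\varepsilon\Lambda_1)\Phi+\varepsilon^2\cdots$ (citing \cite{gwz3}, Lemma 2), which yields the general solution $\Psi=\Psi_0+A\mu$ directly, whereas you differentiate along solutions and substitute $\mu=-\varepsilon\psi$ on the manifold; these are equivalent.
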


\begin{proof} 
A zero order approximation is given by
$\mu(z)=0$.
For a first order approximation we make the ansatz
\[
\Phi(z)=\mu(z)+\varepsilon\Psi(z)=0.
\]
According to (for instance) \cite{gwz3}, Lemma 2, the invariance condition with $h(z)=h^{(0)}(z)+\varepsilon h^{(1)}(z)+\cdots$ is then
\[
D\Phi(z)\cdot h(z)=\left(\Lambda_0(z)+\varepsilon\Lambda_1(z)\right)\cdot \Phi(z) +\varepsilon^2\cdots
\]
with suitable smooth matrix-valued functions $\Lambda_i$.
Recall that 
\[
h^{(0)}(z)=P(z)\cdot \mu(z) 
\]
and that $D\mu(z)P(z)$ is invertible. Evaluation of the invariance condition yields in order zero:
\[
D\mu(z) P(z)\cdot\mu(z)=\Lambda_0(z)\cdot \mu(z);\text{  hence we may set  }\Lambda_0(z)=D\mu(z)P(z).
\]
In order one we obtain
\[
D\mu(z)\,h^{(1)}(z)+D\Psi(z)\,P(z)\cdot \mu(z)=D\mu(z)P(z)\cdot\Psi(z) +\Lambda_1(z)\cdot\mu(z).
\]
This equation has the particular solution
\[
\Psi_0(z)=(D\mu(z)P(z))^{-1}\cdot D\mu(z) \,h^{(1)}(z),\quad \Lambda_1(z)=D\Psi_0(z)\,P(z),
\]
and general solution
\begin{align*}
 &\Psi(z)=\Psi_0(z)+A(z)\mu(z),\\ &\Lambda_1(z)=D\Psi_0(z)\,P(z)+D\left(A(z)\mu(z)\right)P(z)-D\mu(z)P(z)A(z)
\end{align*}
with an arbitrary smooth matrix-valued function $A$, which is irrelevant for the approximation. Indeed, $I_N +\varepsilon A$ is invertible for small $\varepsilon$, hence 
\[
\mu(z)+\varepsilon\left(\Psi_0(z) +A(z)\cdot \mu(z)\right)=\left(I_n+\varepsilon A(z)\right)\cdot\left(\mu(z)+\varepsilon\Psi_0(z) +\varepsilon^2)\cdots\right),
\]
and the determining equations (viz., $\mu(z)+\varepsilon \Psi_0(z)=0$ and $\mu(z)+\varepsilon \Psi(z)=0$) agree up to order $\varepsilon$. 
\end{proof}

Finally, we prove that first integrals of \eqref{evolution} (equivalently, of \eqref{evolutionscaled}) are preserved by the reduction. (Special instances of this appear in the literature; see e.g. Feliu and Wiuf \cite{fewi12}, but there seems to be no record of a general result.) The proof is straightforward via Proposition \ref{decompred}.

\begin{proposition}\label{ersteint}
Let the assumptions of Proposition \ref{decompred} be satisfied, and let $\varphi=\varphi(z,\varepsilon)$ be smooth on a neighborhood of $S \times [0,\varepsilon_0]$ such that $\varphi(\cdot,\varepsilon)$ is a first integral of \eqref{evolutionscaled} for $0<\varepsilon <\varepsilon_0$. Then $\widetilde\varphi\colon S\to \R,\ \widetilde \varphi(z):=\varphi(z,0)$ is constant on $Z$ or a first integral of the reduced system \eqref{Grenzsystem} on $Z$ (i.e., the intersection of $Z$ with any level set of $\widetilde\varphi$ is invariant for the reduced system).
\end{proposition}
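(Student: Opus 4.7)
The plan is to expand the first-integral identity in powers of $\varepsilon$ and extract two consequences on $Y:=Z\cap\widetilde U$: namely, $D\widetilde\varphi(z)\cdot P(z)=0$ and $D\widetilde\varphi(z)\cdot h^{(1)}(z)=0$ for every $z\in Y$. Together with the explicit formula for the reduced vector field $q=Q\,h^{(1)}$ these immediately give $D\widetilde\varphi\cdot q=0$ on $Y$, which is exactly the assertion that $\widetilde\varphi$ is conserved by the reduced system on $Z$.

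First I would write $\varphi(z,\varepsilon)=\widetilde\varphi(z)+\varepsilon\varphi_1(z)+\varepsilon^2\cdots$, which is legitimate by the assumed smoothness up to $\varepsilon=0$. The first-integral condition $D_z\varphi(z,\varepsilon)\cdot h(z,\varepsilon)=0$ (valid for $0<\varepsilon<\varepsilon_0$, and then by continuity for $\varepsilon=0$ as well) can be expanded in $\varepsilon$. Comparing the $\varepsilon^0$ and $\varepsilon^1$ coefficients yields the two identities
\begin{align*}
D\widetilde\varphi(z)\cdot h^{(0)}(z)&=0,\\
D\widetilde\varphi(z)\cdot h^{(1)}(z)+D\varphi_1(z)\cdot h^{(0)}(z)&=0,
\end{align*}
valid on $\widetilde U$. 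Restricting the second identity to $z\in Y$, where $h^{(0)}(z)=0$, gives the second fact stated above.

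The core step is the first fact. Substituting the decomposition $h^{(0)}=P\mu$ from Proposition \ref{decompred}(a) into the order-zero identity and setting $w(z):=D\widetilde\varphi(z)\cdot P(z)\in\R^{1\times r}$, one obtains $w(z)\cdot\mu(z)=0$ identically on $\widetilde U$. Differentiating this scalar identity with respect to $z$ and evaluating at an arbitrary $z^*\in Y$, every term containing the factor $\mu(z^*)=0$ drops out, leaving $w(z^*)\cdot D\mu(z^*)=0$. Since $D\mu(z^*)\in\R^{r\times n}$ has full row rank $r$ on $Y$ (by Proposition \ref{decompred}(a)), its left null space is trivial, and we conclude $w(z^*)=D\widetilde\varphi(z^*)\cdot P(z^*)=0$, as required.

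Combining both facts with the formula $q(z)=h^{(1)}(z)-P(z)(D\mu(z)P(z))^{-1}D\mu(z)\,h^{(1)}(z)$, a direct computation at any $z^*\in Y$ yields
\begin{align*}
D\widetilde\varphi(z^*)\cdot q(z^*)
&=D\widetilde\varphi(z^*)\cdot h^{(1)}(z^*)\\
&\quad-\bigl(D\widetilde\varphi(z^*)\cdot P(z^*)\bigr)(D\mu(z^*)P(z^*))^{-1}D\mu(z^*)\,h^{(1)}(z^*)=0.
\end{align*}
Since the reduced vector field $q$ is tangent to $Y$ by Proposition \ref{decompred}(b), integral curves of \eqref{Grenzsystem} starting on $Y$ remain on $Y$ and preserve $\widetilde\varphi|_Y$; equivalently, the intersection of $Z$ with any level set of $\widetilde\varphi$ is invariant for the reduced system (trivially so if $\widetilde\varphi|_Y$ happens to be constant). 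The only non-cosmetic step is the passage from $(D\widetilde\varphi\cdot P)\cdot\mu\equiv 0$ to $D\widetilde\varphi\cdot P=0$ on $Y$, and this is precisely where the full-rank assumption on $D\mu$ enters; the rest is bookkeeping.
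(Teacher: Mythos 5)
Your proof is correct, but it takes a genuinely different route from the paper. The paper argues ``softly'' via the convergence statement in Proposition \ref{decompred}(c): it evaluates $\varphi$ along a solution $z(\tau,\varepsilon)$ of the full system, uses $\varphi(z(\tau,\varepsilon),\varepsilon)=\varphi(z_0,\varepsilon)$, and passes to the limit $\varepsilon\to 0$ to conclude that $\widetilde\varphi$ is constant along the limiting solution of the reduced system. You instead give an infinitesimal, purely algebraic verification: extract the $\varepsilon^0$ and $\varepsilon^1$ coefficients of the identity $D_z\varphi\cdot h=0$, deduce $D\widetilde\varphi\cdot h^{(1)}=0$ on $Y$ from the order-one relation, and deduce $D\widetilde\varphi\cdot P=0$ on $Y$ from the order-zero relation $\bigl(D\widetilde\varphi\,P\bigr)\mu\equiv 0$ by differentiating and using the full row rank of $D\mu$ on $Y$ (which indeed follows from condition (i) since $Dh^{(0)}=P\,D\mu$ on $Y$); the projection formula then gives $D\widetilde\varphi\cdot q=0$. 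All steps are sound. The trade-off: your argument uses only parts (a) and (b) of Proposition \ref{decompred}, so it does not rely on the eigenvalue condition (iii) or on any convergence statement, and thus applies whenever the decomposition and the reduced vector field are defined; on the other hand it requires $\varphi$ to be differentiable in $\varepsilon$ at $\varepsilon=0$ in order to compare Taylor coefficients, whereas the paper's limit argument only needs continuity in $(z,\varepsilon)$ together with $C^1$-dependence on $z$ (cf.\ Remark \ref{firstintrem}(a)), so the two proofs have complementary ranges of validity.
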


\begin{proof}
 Let $z_0$ be in the domain of attraction of $Z$, denote by $z(\tau,\varepsilon)$ the solution of \eqref{evolutionscaled} with initial value $z(0)=z_0$ and let $\bar z(\tau)$ denote the solution of \eqref{Grenzsystem} such that $z$ converges to $\bar z$ on every closed subset $[T_1,T_2]$ of $(0,T_2]$ as $\varepsilon\to0$. Hence, on $[T_1,T_2]$ we have 
  \[
   \widetilde\varphi(\bar z(\tau))=\varphi(\bar z(\tau),0)\xleftarrow[\varepsilon\to0]{} \varphi(z(\tau,\varepsilon),\varepsilon)=\varphi(z_0,\varepsilon)\xrightarrow[\varepsilon\to0]{} \varphi(z_0,0)
  \]
 by continuity. This implies that $\widetilde \varphi$ is constant on $[0,T_2]$. Since (in particular) every point of $Z$ may be taken as initial value for \eqref{evolutionscaled}, we see that $\widetilde\varphi$ is constant on any solution of \eqref{Grenzsystem} on $Z$.
\end{proof}
\begin{remark}\label{firstintrem}
\begin{enumerate}[(a)]
\item Obviously Proposition \ref{ersteint} holds with less restrictive assumptions on $\varphi$. For instance, requiring continuity in $(z,\varepsilon)$ and continuous differentiability in $z$ for all $\varepsilon$ suffices.
\item The hypotheses of Proposition \ref{ersteint} have rather strong implications for a system \eqref{tikhostandard} in Tikhonov standard form: Given a first integral
\[
\varphi(x,y)=\varphi_0(x,y)+\varepsilon\varphi_1(x,y)+\cdots,
\]
with $\varphi_0$ not constant (w.l.o.g.), we obtain the condition
\[
D_2\varphi_0(x,y)\widetilde g_0(x,y)=0
\]
by comparing lowest order terms in $\varepsilon$. Fix $x=x^*$, let $(x^*,y^*)\in Z$ and consider the equation
\[
\dot y=\widetilde g_0(x^*,y).
\]
The eigenvalue requirement on $D_2\widetilde g_0$ implies that the stationary point $y^*$ of this equation is linearly asymptotically stable, hence the first integral $\varphi(x^*,y)$ must be constant in some neighborhood of $y^*$. We see that $\varphi_0$ is independent of $y$. Morever, comparing higher order terms in $\varepsilon$ will in general show that no (nonconstant) first integrals exist. These remarks indicate for the case of standard reduction that no nonconstant first integrals exist, but matters are different for nonstandard reduction.
\end{enumerate}
\end{remark}

\section{Appendix: Some computations}\label{appextrans}
We return to Example \ref{extrans} b), hence
 \begin{align*}
   &\dot s_{\alpha}^*=-k_1s_{\alpha}^*p_{\alpha}+k_{-1}c_{\alpha}^*+\delta_s\left(s^*_{\alpha-1}-2s^*_\alpha+s^*_{\alpha+1}\right)\\
   &\dot p_{\alpha}=-\varepsilon k_1s_{\alpha}^*p_{\alpha}+\varepsilon k_{-1}c_{\alpha}^*+\varepsilon\delta_p\left(p_{\alpha-1}-2p_\alpha+p_{\alpha+1}\right)\\
   &\dot c_{\alpha}^*=k_1s_{\alpha}^*p_{\alpha}-k_{-1}c_{\alpha}^*+\varepsilon\delta_c\left(c^*_{\alpha-1}-2c^*_\alpha+c^*_{\alpha+1}\right)
  \end{align*}
(with $s_0=s_1$ and $s_{N+1}=s_N$; similarly for $\widehat p$ and $\widehat c^*$), and compute its reduction. The fast part is 
  \[
   h^{(0)}=\begin{pmatrix}
            \mu_{R}+\delta_s\mathcal D \widehat s^*\\ 0 \\ -\mu_R
           \end{pmatrix}
  \]
with
  \[
   \mu_R:=\left(-k_1s^*_{\alpha}p_{\alpha}+k_{-1}c^*_{\alpha}\right)_{1\leq \alpha\leq N}
  \]
and
  \[
   \mathcal D \widehat s^*:=\left(s^*_{\alpha-1}-2s^*_\alpha+s^*_{\alpha+1}\right)_{1\leq \alpha\leq N}.
  \]
We fix the ordering $s^*_1,\ldots,s^*_N,p_1,\ldots,p_N,c^*_1,\ldots,c^*_N$ of the variables. The critical manifold is defined by $s^*_1=\ldots=s^*_N$ and $k_1s^*_{\alpha}p_{\alpha}=k_{-1}c^*_{\alpha}$ for all $\alpha$, 
hence we let
  \[
 \mu=\begin{pmatrix}\mu_R\\ \mu_D\end{pmatrix} \quad\text{with}\quad \mu_D:=\begin{pmatrix}
          s^*_2-s^*_1 \\ \vdots \\ s^*_N-s^*_{N-1}
         \end{pmatrix}.
  \]
The slow part (restricted to the critical manifold) is given by
  \[
   h^{(1)}=\begin{pmatrix}
            0 \\ \delta_p\mathcal D\widehat p\\ \delta_c\mathcal D\widehat c^*
           \end{pmatrix}.
  \]
In order to obtain a convenient form for the decomposition of $h^{(0)}$ we
define 
  \begin{align*}
   &D_1:=-k_1 \diag(p_1,\ldots,p_N),\\ &D_2:=-k_1 \diag(s^*_1,\ldots,s^*_N),\\ &D_3:=k_{-1} I_N,
  \end{align*}
and 
 \[
  M_1:=\begin{pmatrix}
      -1 & 1 & &&\\
         & -1 & 1&&\\
         & & \ddots & \ddots  &\\
         &&& -1 &1
     \end{pmatrix}\in\R^{(N-1)\times N},  \quad M_2:=-M_1^{\rm tr}\in\R^{N\times (N-1)},
 \]
furthermore
  \[
   P:=\begin{pmatrix}
       I_N & M_2 \\ 0 & 0 \\ -I_N & 0
      \end{pmatrix}.
  \]
Then one has a decomposition $h^{(0)}=P\cdot\mu$ in the sense of Proposition \ref{decompred}, and
  \[
   D\mu=\begin{pmatrix}
         D_1 & D_2 & D_3 \\ M_1 & 0 & 0
        \end{pmatrix}\Longrightarrow     D\mu P=\begin{pmatrix}
         D_1-D_3 & D_1M_2 \\ M_1 & M_1M_2
        \end{pmatrix}.
  \]
It is not necessary to invert this matrix in order to find the projection: According to Goeke \cite{godiss}, Bem.~2.1.9 it suffices to find some $\gamma=(\gamma_1,\gamma_2)^{\tr}\in \R^{N+(N-1)}$ such that
  \begin{equation}\label{hilfeapp}
   D\mu\cdot P\gamma=D\mu\cdot h^{(1)}.
  \end{equation}
Then the reduced system is then given as
\begin{align*}
 &{\begin{pmatrix}
   \widehat s^* \\ \widehat p \\ \widehat c^*
  \end{pmatrix}}'
=h^{(1)}-P\cdot \gamma.
\end{align*}
Equation \eqref{hilfeapp} is equivalent to the system
  \begin{equation}\label{auxsys}
\begin{array}{rcl}
   (D_1-D_3)\gamma_1+D_1M_2\gamma_2&=&\Delta\\
   M_1\left(\gamma_1+M_2\gamma_2\right)&=&0
\end{array}
  \end{equation}
with 
  \[
   \Delta:=D_2\delta_p\mathcal D\widehat p+D_3\delta_c \mathcal D\widehat c^*.
  \]
The second equation in \eqref{auxsys} is solved by $\gamma_1+M_2\gamma_2=0$, and substitution into the first equation yields $D_3\gamma_1=-\Delta$. Thus there exists a solution with
  \begin{align*}
   &\gamma_1=-(k_{-1})^{-1}\Delta\\
   &P_D\gamma_2=(k_{-1})^{-1}\Delta.\\
  \end{align*}
The reduced system on the critical manifold is given by
\begin{align*}
 &{\begin{pmatrix}
   \widehat s^* \\ \widehat p \\ \widehat c^*
  \end{pmatrix}}'
=\begin{pmatrix}
  -\gamma_1-M_2\gamma_2 \\ \delta_p \mathcal D(\widehat p)\\ \delta_c \mathcal D(\widehat c^*)+\gamma_1
 \end{pmatrix}
=\begin{pmatrix}
  0 \\ \delta_p \mathcal D(\widehat p) \\ k_1k_{-1}^{-1}s_0^*\delta_p \mathcal D(\widehat p)
 \end{pmatrix}
\end{align*}
with $s_0^*:=s^*_1=\ldots=s^*_N$. The condition $k_1s_{\alpha}p_{\alpha}=k_{-1}c^*_{\alpha}$ together with constancy of $\widehat s^*$  now shows that this is a discrete diffusion equation
  \[
   {p_{\alpha}}'=\delta_p\left(p_{\alpha-1}-2p_\alpha+p_{\alpha+1}\right)
  \]
augmented by $s^*_{\alpha}=\widetilde s^*_{0}$ and $c^*_{\alpha}=\frac{k_1\widetilde s^*_{0}}{k_{-1}}p_{\alpha}$, with $\widetilde s^*_{0}$ the corresponding initial value of $s^*_{\alpha}$ on the critical manifold. This initial value is determined according to \cite{gw2}, Prop.~2: We show that
  \[
   \widetilde s_0^*=\frac{\sum_{\alpha=1}^N(s^*_{\alpha,0}+c^*_{\alpha,0})}{N+k_1k_{-1}^{-1}\sum_{\alpha=1}^Np_{\alpha,0}},
  \]
where $\left(s^*_{\alpha,0},p_{\alpha,0},c^*_{\alpha,0}\right)_{1\leq \alpha\leq N}$ are the initial values of the scaled system. Indeed
  \begin{align*}
   &\psi_{1,\alpha}=p_{\alpha},\quad 1\leq \alpha\leq N\\
   &\psi_{2}=\sum_{\alpha=1}^N(s^*_{\alpha}+c^*_{\alpha})
  \end{align*}
are first integrals of the fast system $\dot x =h^{(0)}(x)$. Now Prop.~2 in \cite{gw2} shows that the initial values $\left(\widetilde s^*_{\alpha,0},\widetilde p_{\alpha,0},\widetilde c^*_{\alpha,0}\right)_{1\leq \alpha\leq N}$ on the critical manifold are determined by the conditions
  \begin{align*}
   &\widetilde p_{\alpha,0}=p_{\alpha,0}\\
   &\sum_{\alpha=1}^N(\widetilde s^*_{\alpha,0}+\widetilde c^*_{\alpha,0})=\sum_{\alpha=1}^N(s^*_{\alpha,0}+c^*_{\alpha,0})\\
   &\widetilde s^*_0:=\widetilde s^*_{1,0}=\ldots=\widetilde s^*_{N,0}\\ 
   &k_1\widetilde s^*_{\alpha,0}\widetilde p_{\alpha,0}=k_{-1}\widetilde c^*_{\alpha,0}.
  \end{align*}
The assertion follows.

\end{document}